\newcommand{\Z}{\mathbb{Z}}
\newcommand{\D}{\mathbb{D}}
\newcommand{\Q}{\mathbb{Q}}
\newcommand{\N}{\mathbb{N}}
\newcommand{\ff}{\mathbb{F}}
\newcommand{\F}{\mathbb{F}}
\newcommand{\Prep}{\mathcal{P}}
\newcommand{\sk}{\smallskip}
\newcommand{\Sym}{\mathbb{S}}
\newtheorem{thm}{Theorem}[section]
\newtheorem{prop}[thm]{Proposition}
\newtheorem{coro}[thm]{Corollary}
\theoremstyle{definition}
\newtheorem{rem}[thm]{Remark}
\newtheorem{exam}[thm]{Example}
\theoremstyle{remark}
\theoremstyle{definition}
\newtheorem{defi}[thm]{Definition}
\begin{document} \sloppy
\numberwithin{equation}{section}
\title{Lee metrics on groups}
\author{Ricardo A.\@ Podest\'a, Maximiliano G.\@ Vides}
\dedicatory{\today} 
\keywords{Invariant metric, weight function, finite groups, symmetry groups}
\thanks{2010 {\it Mathematics Subject Classification.} Primary 54E35;\, Secondary 20F65, 94B05.}
\thanks{Partially supported by CONICET, FONCyT and SECyT-UNC}

\address{Ricardo A.\@ Podest\'a, FaMAF -- CIEM (CONICET), Universidad Nacional de C\'ordoba, \newline
 Av.\@ Medina Allende 2144, Ciudad Universitaria, (5000) C\'ordoba, Rep\'ublica Argentina. \newline
{\it E-mail: podesta@famaf.unc.edu.ar}}

\address{Maximiliano G.\@ Vides 
FaMAF -- CIEM (CONICET), Universidad Nacional de C\'ordoba, \newline
 Av.\@ Medina Allende 2144, Ciudad Universitaria, (5000)   C\' ordoba, Rep\'ublica Argentina. \newline
{\it E-mail: mvides@famaf.unc.edu.ar}}

\begin{abstract}
In this work we consider interval metrics on groups; that is, integral invariant metrics whose associated weight functions do not have gaps. We give conditions for a group to have and not to have interval metrics. Then we study Lee metrics on general groups, that is interval metrics having the finest unitary symmetric associated partition. These metrics generalize the classic Lee metric on cyclic groups. 
In the case that $G$ is a torsion-free group or a finite group of odd order, we prove that $G$ has a Lee metric if and only if $G$ is cyclic. Also, if $G$ is a group admitting Lee metrics then $G \times \Z_2^k$ always has Lee metrics for every $k \in \N$. Then, we
show that some families of metacyclic groups, such as cyclic, dihedral, and dicyclic groups, always have Lee metrics. 
Finally, we give conditions for non-cyclic groups such that they do not have Lee metrics. We end with tables of all groups of order $\le 31$ indicating which of them have (or have not) Lee metrics and why (not).
\end{abstract}

\maketitle

\section{Introduction}
The Lee metric, defined over the ring of integers $\Z_m$, was introduced in 1958 by Lee \cite{Lee58} as an alternative to the standard Hamming metric on $\ff_q^n$. Almost forty years later, Hammons et al.\@ \cite{HKCSS} showed that the binary nonlinear Preparata and Kerdock codes can be considered as the image under the Gray map $\mathcal{G}: \Z_4^n \rightarrow \ff_2^{2n}$ of a linear code over $\Z_4$ (see also the work of Nechaev \cite{Ne}). This gave a new impulse to the study of linear codes over $\Z_4$ (or $\Z_m$) with the Lee metric and more generally to codes over arbitrary rings such as Galois rings $G(p^m,d)$. 

Later, Carlet \cite{Ca} and Yildiz-Özger \cite{YO} defined more general Gray maps from $\Z_{p^k}$ to $\Z_p^r$, with $r=p^{k-1}$, which allowed to introduce certain generalizations of Lee metrics. In this case, the focus is on the property of having a weight preserving map from $\Z_{p^k}$ to a Hamming space $(\Z_p^{r},d_{Ham})$.
In \cite{PV}, we studied isometric embeddings and extensions of metrics. The metrics defined by Carlet and Yildiz-Özger are thus extended Lee metrics in the sense defined in \cite{PV}. We have obtained more \textit{extended Lee metrics} from $\Z_m$ to the Hamming space $(\Z_k^{{\small m/k}},d_{Ham})$.

In another direction, Batagelj \cite{Ba95} studied weight functions (norms) over groups, more specifically interval weights, and defined a (generalized) Lee weight. 
The classical Lee metric on cyclic groups has the property that among all integer valued metrics that take all the values from $0$ to $n$, it achieves the maximum possible value for $n$.
We will abstract this property to study the existence of Lee metrics on arbitrary groups.
In \cite{PV2} we studied invariant metrics on groups $G$ by considering unitary symmetric partitions of $G$ and noticed that the existence of a Lee metric for a group $G$ can be thought as a property of the finest of such partitions, that we called the Lee partition of a group $G$. Such Lee partition always exists, and as we proved in \cite[Lemma 2.4]{PV2}, every unitary and symmetric partition of a group $G$ gives rise to an invariant metric. Furthermore, it is possible to choose this metric to be integral. However, it may not always define an interval metric, as we shall see.
In the cases that the Lee partition defines an interval metric we will say that it is a Lee metric.

\subsubsection*{Invariant metrics on groups} 
A \textit{metric} on a set $X$ is a function $d : X \times X \rightarrow\mathbb{R}_{\geq 0}$ such that for every $x,y,z\in X$ satisfies the following three conditions:
\begin{enumerate}[($d1$)]
\setlength{\itemsep}{1mm}
\item $d(x,y)\geq 0$ and $d(x,y)=0 \,\Leftrightarrow\, x=y$ (positiveness);
\item $d(x,y)=d(y,x)$ (commutativity);
\item $d(x,y)\leq d(x,z) + d(x,z)$ (triangular inequality).
\end{enumerate}
One says that $(X,d)$ is a \textit{metric space}. 
If $X=G$ is a group, then we will say that $(G,d)$ is a \textit{metric group}. 
A \textit{weight} on $G$ is a function $w : G \rightarrow \mathbb{R}_{\ge 0}$ such that for every $x,y\in G$ satisfies:
\begin{enumerate}[($w1$)]
\setlength{\itemsep}{1mm}
  \item positiveness: $w(x) \geq 0$ and $w(x)=0$ if and only if $x=e$, the identity in $G$;
  \item symmetry: $w(x)=w(x^{-1})$;
  \item triangular inequality: $w(xy) \leq w(x) + w(y)$.
\end{enumerate}

We now recall the basic definitions and notations of invariant metrics on groups from Section~2 in \cite{PV2}. 
From now on we assume that $(G,d)$ is a metric group. 
The metric $d$ is called \textit{right} (resp.\@ \textit{left}) \textit{translation invariant} if for any $g,g',h$ in $G$ we have 
$$d(gh,g'h)=d(g,g')$$ 
(resp.\@ $d(hg,hg')=d(g,g')$). If $d$ is both right-invariant and left-invariant we say that $d$ is \textit{bi-invariant}.
For $G$ abelian both notions coincide and $d$ is bi-invariant.   

Given $(G,d)$ a metric group we have the induced weight function $w : G\rightarrow \mathbb{R}_{\ge 0} $ given by  
$w(x)=d(x,e)$ for any $x \in G$, where $e$ is the identity element of $G$. 
Conversely, if $(G,w)$ is a weight space, one can define a metric $d$ on $G$ by 
$d(x,y) = w(xy^{-1})$ 
for every $x,y \in G$ provided that $w(x^{-1})=w(x)$ for every $x \in G$ (denoted $d(x,y) = w(x-y)$ and $w(-x)=w(x)$ if $G$ is abelian). This metric is right translation invariant by definition. 
Bi-invariant metrics are closely related with conjugacy classes.
If $d$ is bi-invariant then $w$ is constant on conjugacy classes. That is, for every $x,y \in G$ we have 
$$w(x)=d(x,e)=d(yx,y)=d(yxy^{-1}, yy^{-1})=d(yxy^{-1},e)=w(yxy^{-1})$$
where in the second and third equalities we have used left and right invariance, respectively.	
Conversely, if $d$ is right (or left) invariant	and $w$ is constant on conjugacy classes then $d$ is bi-invariant.
In fact, for every $x,y,z \in G$ we have 
$$ d(zx,zy)=w(zx(zy)^{-1})=w(zxy^{-1}z^{-1})=w(xy^{-1})=d(x,y).$$

Let $(G,d)$ be a metric group (recall that $d$ is right translation invariant) with associated weight function $w$. 
The \textit{induced partition} of $(G,d)$, denoted by $\mathcal{P}=P(G,d)$, is the partition of $G$ determined by the equivalence relation
\begin{equation} \label{eq part}
g \sim h \quad \Leftrightarrow \quad w(g)=w(h) \quad \Leftrightarrow \quad d(g,e)=d(h,e) 
\end{equation} 
for any $g,h \in G$.                               
We will denote by $P_0, \ldots, P_s$ the parts of $\mathcal{P}$, that is 
$$\mathcal{P}=\{P_0,\ldots,P_s\}.$$
Thus, if $w_0, w_1, \ldots, w_s$ are the different weights of $G$ (i.e.\@ the different real values that $w(g)$ can take for $g\in G$), we will say that $G$ is an \textit{$s$-weight} metric group. In this case we have that 
	$$P_i=\{g\in G : w(g)=w_i\}.$$
We will say that $\mathcal{P}$ is \textit{unitary} if $\{ e \} \in P$, in which case we assume that $P_0=\{e\}$, and that it is \textit{symmetric} if 
$$P_i=P_i^{-1},$$ 
i.e.\@ $g \in P_i$ if and only if $g^{-1} \in P_i$, for all $i=0,\ldots,s$. 
The partition $P_0,P_1,\ldots,P_s$ associated to a bi-invariant metric of $G$ is unitary symmetric and, by the previous comments, also \textit{conjugate}, i.e.\@ 
$gP_i g^{-1}=P_i$ for all $g\in G$ and $i=0,\ldots,s$.

A metric $d$ on $G$ is called \textit{integral} if it only takes integer values. 
Since our main motivation to study metrics on groups are applications to combinatorics (coding theory, for instance), 
from now on we will only consider integral metrics. Out of these, we will only focus on the so called interval and $\mathcal{P}$-interval metrics (see Definitions \ref{interval} and \ref{P-interval}) and on a special type of interval metrics, the Lee metrics, metrics generalizing the Lee metrics on cyclic groups.


\noindent 
\textit{Outline and results.}
Briefly, in Section 2 we consider interval metrics, in Section 3 we study Lee metrics and groups admitting them and in Section 4 we study groups not allowing Lee metrics. Finally, in Section 5 we study Lee metrics on finite groups of small order.

Now, we summarize the results in the paper in more detail.
Let $(G,d)$ be a metric group, where $d$ is an invariant metric, and let $\mathcal{P}=P(G,d)$ be the corresponding unitary symmetric partition with parts $P_0, P_1,\ldots,P_s$.

In Section 2 we study \textit{interval} metrics, that is integral invariant metrics without gaps in its weight values. 
In Proposition \ref{prop noIIM} we give a criterion on the parts $P_i$ of the partition $\mathcal{P}$ such that 
there is no interval metric $d$ associated to the partition $\Prep$. As a result, in Corollary~\ref{Zp no int} we show that the cyclic group $\Z_p$ has at least one non $\Prep$-interval metric for every prime $p$ with $p\equiv 1 \pmod 4$. On the other hand, in Proposition \ref{2-weights} we give a criterion which ensures that the group admits an interval metric for partitions with few parts ($2 \le s \le 4$).
Then, in Theorem~\ref{kGkH} we give conditions for a group $G$ having a subgroup $H$ of index 2 to have non $\mathcal{P}$-interval metrics. As a consequence, we get Corollaries \ref{ZnDnQnSn} and \ref{G>16}. In the first one we show that the cyclic groups $\Z_{2n}$ for $n\ge 7$, the dihedral groups $\D_n$ for $n\ge 4$, the dicyclic groups $\Q_{4n}$ for $n\ge 3$, the symmetric groups $\Sym_n$ for $n\ge 4$, the groups $\Z_2^r$ for any $r\ge 3$, and $G\times \Z_2^r$ for any non-trivial group $G$ and $r\ge 2$ have non $\mathcal{P}$-interval metrics. In the second one, we prove that if $|G|\ge 16$ and $G$ has a subgroup of index $2$ then $G$ has non $\mathcal{P}$-interval metrics. In particular, any abelian group of even order $2n$ with $n\ge 8$ has non $\mathcal{P}$-interval metrics.  Using the results in the section, in Examples~\ref{examG<8}--\ref{z3xz3} and Remark \ref{remarquito} we give a complete study of the existence of $\mathcal{P}$-interval metrics for all the groups of order $\le 9$.

In the next two sections we study \textit{Lee metrics} on an arbitrary group $G$, that is interval metrics whose associated unitary symmetric partitions are the finest ones. 
In Section 3 we introduce Lee metrics on groups (not necessarily cyclic) and study groups \textsl{admitting} these metrics. 
We first obtain general results. In Proposition \ref{some Lee} we prove that 
if $G$ has a Lee metric then $G \times \Z_2^{k}$ has a Lee metric for any $k\in \N$ and in Theorem \ref{odd order}
we show that if $G$ is finite of odd order, then $G$ has a Lee metric if and only if $G$ is cyclic.
In Proposition \ref{charact bi}, we characterize all the groups having bi-invariant Lee metrics, that is, those groups $G$ in which any given (invariant) Lee metric $d_{Lee}$ is also bi-invariant. It turns out that $d_{Lee}$ is bi-invariant if and only if $G$ is abelian or $G= \Q_8 \times \Z_2^k$ for some $k\in \N_0$.
Then we study some particular families of metacyclic groups. We show in Theorems \ref{Dn Lee} and \ref{Qn Lee} 
that dihedral groups $\D_n$ and dicyclic groups $\Q_{4n}$ (including the generalized quaternion groups) 
always have Lee metrics. 
In particular, by the above mentioned propositions, we obtain that the groups $G\times \Z_2^k$ with $G=\D_n, \Q_{4n}$ has Lee metrics for every $k \in \N$ and $n\ge 2$.

In Section 4 we study groups \textsl{not admittin}g Lee metrics. It is immediate from Theorem \ref{odd order} that non-abelian groups of odd order do not have Lee metrics. In Theorem \ref{non abelian cond} we generalize this result to non-abelian groups of any order.
In fact, we prove that if $G$ is non-cyclic with $Z(G)$ not having elements of order $2$ and the subgroup generated by 
the elements of order 2 and 4 is not the whole group, 
then $G$ has no Lee metrics. As a result, in Corollary \ref{torsion-free} we show that if $G$ is a torsion-free group, then $G$ has a Lee metric if and only if $G$ is cyclic. As another consequence of Theorem \ref{non abelian cond}, in Proposition \ref{aff}
we show that the affine group $\mathrm{Aff}(\ff_q)$ of the finite field of $q$ elements
has no Lee metrics for all $q\ne 2,3,5$.
Moreover, any subgroup of the form $\ff_q \rtimes H$ with $H$ a subgroup of $\ff_q^*$ where $H \not\simeq \{1\},\Z_2, \Z_4$, has no Lee metrics.
	
In the final section we study Lee metrics on finite groups of small order. 
In Tables \ref{tablita}--\ref{tablitab2} we list the 93 groups of order $\le 31$ indicating which of them have (not) Lee metrics and why (not).

\goodbreak

\section{Interval metrics on groups}
Here we will study a special type of integral metrics. 
We will say that an integral metric is \textit{gapless} or without gaps 
if and only if the associated weights are $0,1, \ldots,n,$ for a partition with $n$ non-trivial parts or else the weight function takes all the values in $\N_0$ if the partition has infinitely many parts.
We have the following definition.
\begin{defi}[\cite{Ba95}] \label{interval}
An \textit{interval metric} on a group is a gapless invariant metric with a finite number of weights. 
\end{defi}
In this case, we have (assuming $0=w_0 < w_1 < w_2 < \cdots < w_n$)  that 
$$P_i = \{ x \in G : w(x)=i \}.$$
For instance, this is the case for the Hamming and Lee metrics on $\Z_n$ with weights $0,1$ and $0,1,\ldots,\lceil \frac n2\rceil$, respectively. They are interval metrics with the minimum and maximum number of weights in $\Z_n$, respectively. 

Two metrics $d_1$ and $d_2$ on a group $G$ are said to be \textit{$\mathcal{P}$-equivalent}, denoted $d_1 \sim d_2$, if they define the
	same unitary symmetric partitions on $G$, i.e.\@ $P(G,d_1)=P(G,d_2)$.
By Lemma 2.5 in \cite{PV2} we know that any non-integral metric of $G$ with finite partition is 
$\mathcal{P}$-equivalent to some integral metric. However, it should be noted that the resulting metric is not necessarily an interval metric. Thus, we give the following definition. 
\begin{defi} \label{P-interval}
A metric $d$ on a group is called  \textit{$\mathcal{P}$-interval} if it is $\mathcal{P}$-equivalent to some 
interval metric $d'$.
\end{defi}

We will use the following notation: if $\{P_0, P_1,\ldots, P_s\}$ is a partition of a group $G$, for any $1 \le i \le j \le s$ we write 
\begin{equation} \label{PiPj}
P_i \ast P_j= P_i P_j \cup P_jP_i = \{gh : g\in P_i, h\in P_j\} \cup \{hg : g\in P_i, h\in P_j\}.
\end{equation}
Notice that if $j=i$, then $P_i \ast P_i = P_iP_i=\{gh : g,h \in P_i\}$. Of course, if $G$ is abelian we have that $P_i \ast P_j =P_iP_j=P_jP_i$ for ever $i\ne j$ and in this case we use the additive notation 
$P_i + P_j= \{g+h : g\in P_i, h\in P_j\}$.

\begin{rem} \label{Z13noIIM}
	We point out that not every metric is $\mathcal{P}$-interval.
	Consider the group $\Z_{13}$ with the partition $P_0=\{0\}$, $P_1=\{1,5,8,12\}$, $P_2=\{2,3,10,11\}$, $P_3=\{4,6,7,9\}$ and let $w$ be a weight function associated with this partition. 
	Notice that 
	$$P_i+P_i=P_0 \cup P_j \cup P_k$$ 
	for $i,j,k \in \{1,2,3\}$ with $i,j,k$ all different. Using this fact and the triangle inequality one can show by contradiction that any integral metric induced by this partition must have gaps (see the proof of the next proposition). 
	On the other hand, it is worth noticing that one can define an integral metric associated with the given partition by assigning the values $w_0=0$, $w_1=2$, $w_2=3$, and $w_3=4$, for example.
\end{rem}

Abstracting the idea in the previous remark we give a general criterion for a unitary symmetric partition not to admit a $\mathcal{P}$-interval metric. 
\begin{prop} \label{prop noIIM}
	Let $\mathcal{P}=\{P_0,P_1,\ldots,P_s\}$ be a unitary symmetric partition of a group $G$. If for every $i=1,\ldots,s$ we have 
	\begin{equation} \label{conditions} 
		P_i \ast P_i \subset P_0 \cup P_{j_{i,1}} \cup \cdots \cup P_{j_{i,\ell(i)}}
	\end{equation}
where $j_{i,1},j_{i,2},\ldots,j_{i,\ell(i)} \subset \{1,2,\ldots,s\}  \smallsetminus \{i\}$ 
and $(P_i \ast P_i) \cap P_{j_{i,k}} \ne \varnothing$ for all $k=1,\ldots, \ell(i)$ with $\ell(i) \ge 2$,
then every metric $d$ associated to the partition $\mathcal{P}$ of $G$ is not interval.
\end{prop}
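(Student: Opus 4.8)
The plan is to argue by contradiction, exploiting the fact that an interval metric forces the weights to be \emph{consecutive integers}, one per part. Suppose, to the contrary, that some metric $d$ associated to $\mathcal{P}$ (so that $P(G,d)=\mathcal{P}$) were interval. Being gapless with finitely many weights, and since $\mathcal{P}$ has exactly $s+1$ parts, its weight function must take precisely the values $0,1,\ldots,s$, and because the parts are the level sets $P_i=\{x:w(x)=w_i\}$ with $w_0<w_1<\cdots<w_s$, each weight is attained on a single part. I would relabel the parts so that $w(P_i)=i$ for $i=0,1,\ldots,s$; this relabeling is harmless, since the hypothesis \eqref{conditions} is a statement about each nontrivial part individually and does not depend on the naming.

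The decisive step is to examine the part of \emph{minimal} positive weight, namely $P_1$. By the triangle inequality, for any $g,h\in P_1$ one has $w(gh)\le w(g)+w(h)=2$, and likewise $w(hg)\le 2$; hence every element of $P_1\ast P_1$ has weight at most $2$, so that $P_1\ast P_1\subseteq P_0\cup P_1\cup P_2$. In particular, the only nontrivial part different from $P_1$ that $P_1\ast P_1$ can possibly meet is $P_2$. On the other hand, applying the hypothesis to $i=1$ produces distinct indices $j_{1,1},\ldots,j_{1,\ell(1)}\in\{1,\ldots,s\}\setminus\{1\}$ with $\ell(1)\ge 2$ and $(P_1\ast P_1)\cap P_{j_{1,k}}\ne\varnothing$ for every $k$; thus $P_1\ast P_1$ genuinely meets at least two distinct parts, each different from $P_0$ and from $P_1$. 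This contradicts the containment of the previous sentence, which permits at most one such part. Therefore no metric associated to $\mathcal{P}$ can be interval.

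The argument is short, and the one point requiring care is precisely the reduction to the minimal-weight part. For a general index $i$ the triangle inequality only yields $P_i\ast P_i\subseteq\{x:w(x)\le 2i\}$, which may involve as many as $2i$ nontrivial parts and so produces no contradiction; it is exactly the choice $i=1$, combined with the interval-metric feature that distinct parts carry distinct consecutive integer weights, that squeezes $P_1\ast P_1$ into $P_0\cup P_1\cup P_2$ and clashes with the requirement $\ell(1)\ge 2$. (One should also note that the hypothesis is vacuous unless $s\ge 3$, since two distinct indices are needed in $\{2,\ldots,s\}$; this guarantees that $P_2$ exists and the comparison above is meaningful.)
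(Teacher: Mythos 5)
Your proof is correct and follows essentially the same route as the paper's: both arguments fix the part of weight $1$, use the triangle inequality to confine $P_1\ast P_1$ to elements of weight at most $2$, and derive a contradiction from the hypothesis that $P_1\ast P_1$ meets at least two distinct parts other than $P_0$ and $P_1$, which would force two different parts to share the weight $2$. Your explicit remarks on the label-independence of the hypothesis and on its vacuity for $s\le 2$ are accurate refinements of the same argument, not a different method.
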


\begin{proof}
	Let $w$ be the weight function on $G$ associated to $d$. 
	We now show that any integral metric induced by the partition $\mathcal{P}$ cannot be interval. 
	Suppose, by contradiction, that $w$ is interval and $w(P_i)=1$ for a fixed $i$ with $1\leq i \leq s$.
	By \eqref{conditions} and the assumptions following it, there are at least two elements $x\in P_j=P_{j_i,k_1}$, $y\in P_k=P_{j_i,k_2}$ with $j \ne k$ and $j,k \notin \{ 0,i\}$ that can be obtained as a product of $2$ elements in $P_i$, 
	say $x=g_1h_1$ and $y=g_2y_2$ with $g_1,g_2,h_1,h_2 \in P_i$. 
	By the triangle inequality, this means that $w(P_j) \le 2w(P_i) = 2$ and $w(P_k) \le 2w(P_i) =2$. 
	This implies that $w(P_j)=w(P_k)=2$, which is absurd since $w(P_j) \ne w(P_k)$.  
\end{proof}

We can now apply the proposition to generalize the counterexample given in Remark \ref{Z13noIIM}. 
\begin{coro} \label{Zp no int}
	For any prime $p\geq 13$ with $p\equiv 1 \pmod 4$ the cyclic group $\Z_p$ has at least one non $\mathcal{P}$-interval metric.
\end{coro}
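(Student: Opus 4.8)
The plan is to apply Proposition~\ref{prop noIIM} to a single explicit unitary symmetric partition of $\Z_p$, generalizing the one exhibited for $\Z_{13}$ in Remark~\ref{Z13noIIM}. The first thing I would do is recognize the structure behind that example: its three nontrivial parts are precisely the cosets of the (unique) cyclic subgroup of order $4$ of $\Z_{13}^*$. This points directly to the general construction.

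Since $p\equiv 1\pmod 4$, the cyclic group $\Z_p^*$, of order $p-1$, contains a unique subgroup $C=\{1,u,-1,-u\}$ of order $4$, where $u^2=-1$. I would take $P_0=\{0\}$ and let $P_1,\dots,P_s$ be the $s=(p-1)/4$ cosets of $C$ in $\Z_p^*$. As $-1\in C$, every coset $aC$ satisfies $-(aC)=aC$, so the partition is unitary and symmetric; and the hypothesis $p\ge 13$ forces $s\ge 3$, which is what leaves room for the condition $\ell(i)\ge 2$.

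The core computation is the sumset of $C$ with itself. Adding all pairs of elements of $C$ gives $C+C=\{0\}\cup 2C\cup(1+u)C$, so for each coset $P_i=aC$ one obtains
\[
  P_i+P_i=P_0\cup(2a)C\cup\bigl(a(1+u)\bigr)C .
\]
Hence $P_i+P_i$ is contained in $P_0$ together with the two cosets $(2a)C$ and $(a(1+u))C$, and it remains only to verify that these are two distinct parts, both different from $P_i$. After dividing by $a$, this reduces to checking $2\notin C$, $1+u\notin C$, and $2(1+u)^{-1}=1-u\notin C$; a short computation shows that each such membership would force $p\le 5$, so all three fail for $p\ge 13$. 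Therefore the hypotheses of Proposition~\ref{prop noIIM} hold for every $i$ with $\ell(i)=2$, the partition admits no interval metric, and any metric it induces is non-$\mathcal{P}$-interval.

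The sumset identity $C+C=\{0\}\cup 2C\cup(1+u)C$ is a routine calculation; the step that needs care is the \emph{uniform} verification that the two cosets occurring in $P_i+P_i$ are genuinely different from one another and from $P_i$. This is exactly where the hypotheses are consumed: $p\equiv 1\pmod 4$ both produces the order-$4$ subgroup and, via $-1\in C$, makes the cosets symmetric, while $p\ge 13$ (equivalently $s\ge 3$, together with the exclusion of the small primes $p=3,5$) guarantees that $P_i$, $(2a)C$ and $(a(1+u))C$ are three genuinely distinct parts.
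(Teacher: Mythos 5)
Your proposal is correct and follows essentially the same route as the paper: the paper also takes the nontrivial parts to be the cosets of the unique order-$4$ subgroup $P_1=\{1,r,-1,-r\}$ of $\Z_p^*$ (your $C$ with $u=r$), computes $P_i+P_i=P_0\cup (2a)P_1\cup (a(1+r))P_1$, and rules out the coincidences $2\in P_1$, $1+r\in P_1$, $1-r\in P_1$ by observing each would force $p\in\{2,3,5\}$ before invoking Proposition~\ref{prop noIIM}. No substantive difference.
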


\begin{proof}
	Let $s=\frac{p-1}4$ and let $P_1$ be the subgroup of the units of $\Z_p$ formed by the $s$-powers, i.e.\@ 
	$P_1=\{x^s : x \in \Z_p^*\}$.
	Notice that $P_1 \simeq \Z_4$ since, if $\alpha$ is a generator of the cyclic group $\Z_p^*$, then 
	$$P_1 = \{1,r,r^2,r^3\},$$ 
	where $r=\alpha^s$ has order $4$, $r^2=-1$ and $r^3=-r$. 

	Now, consider all the $s$ cosets $\{gP_1\}_{g\in \Z_p^*}$ of $P_1$ and denote them by $P_1, \ldots, P_s$. Hence, $\#P_i=4$ for all $i=1,\ldots,s$. 
	In this way, we have a unitary symmetric partition 
	$\mathcal{P}=\{P_0, P_1, \ldots,P_s\}$ of $\Z_p$, where $P_0=\{e\}$. 
	In fact, for each $i=1,\ldots,s$, we have 
		$$P_i=g_iP_1 =\{g_i, g_i r, g_i r^2, g_i r^3\} = \{g_i,g_i r,-g_i, -g_i r\}$$ 
	for some $g_i \in \Z_p^*$, and hence $P_i=P_i^{-1}$.
		
	We now look for conditions \eqref{conditions} in this case. 
	We will see that for every $i=1,\ldots,s$ we have 
\begin{equation} \label{Pijk}
	P_i + P_i  = P_0 \cup P_{2g} \cup P_{g(r+1)},
\end{equation}
	where $P_{\{ 2gr \}}, P_{\{ g(1+r)\}}\neq P_i$. Notice that it is enough to prove conditions \eqref{Pijk} for $i=1$, that is
	$$P_1 + P_1  = P_0 \cup P_{2} \cup P_{r+1}$$ 
	with $P_2, P_{r+1} \ne P_1$.
	First, observe that $2\not\in P_1$ since $2^4\equiv 1 \pmod p$ if and only if $p=3,5$, so $P_{2} \ne P_{1}$. Similarly, $1\not\in P_{r+1}$, since it would also lead to $2^4 \equiv 1 \pmod p$, hence  $P_{r+1}\neq P_{1}$.  Analogously, we can see that $2\not\in P_{g+1}$ since otherwise we would have $g=\pm 1$ or $3^4\equiv 1 \pmod p$, the latter only being possible if $p=2,5$, and hence $P_{2}\neq P_{r+1}$. 
	In particular, we have 
	$$(P_i+P_i) \cap P_{\{ 2g \}} \ne \varnothing \qquad \text{and} \qquad 
	(P_i+P_i) \cap P_{\{ g(r+1)\}} \ne \varnothing$$ 
	for every $i=1,\ldots, s$. 
	Thus, we are in the hypothesis of Proposition \ref{prop noIIM} and hence the weight $w$ has gaps, that is, $d$ is not interval.  
\end{proof}

Note that the partition of $\Z_{13}$ in Remark \ref{Z13noIIM} is given by the cosets of the cubes, that is 
$P_1=\{x^3:x\in \Z_{13}^*\}=\{1,5,8,12\}$, $P_2=2P_1=\{2,3,10,11\}$, and $P_3=4P_1=\{4,6,7,9\}$.

\begin{exam}
	By the corollary, $\Z_{17}$ and $\Z_{29}$ have some non $\mathcal{P}$-interval metrics. We now look at condition \eqref{Pijk} in more detail.
	
	$(i)$ For $\Z_{17}$ we have $s=4$ and 
	$P_1=\{x^4 : x \in \Z_{17}^*\}=\{1,4,13,16\}=\{\pm 1, \pm 4\}$ 
	and hence $P_2=2P_1=\{2,8,9,15\}=\{\pm 2, \pm 8\}$,
	$P_2=3P_1=\{3,5,12,14\}=\{\pm 3, \pm 5\}$, and $P_4=6P_1=\{6,7,10,11\}=\{\pm 6, \pm 7\}$. One can check that
	\begin{align*}
	P_1+P_1 = P_0 \cup P_2 \cup P_3, \qquad \qquad & P_3+P_3 = P_0 \cup P_2 \cup P_4,  \\
	P_2+P_2 = P_0 \cup P_1 \cup P_4, \qquad \qquad & P_4+P_4 = P_0 \cup P_1 \cup P_3,
	\end{align*}
and hence condition \eqref{Pijk} holds.
	
	\noindent $(ii)$
	For $\Z_{29}$ we have $s=7$ and 
	$P_1=\{x^7 : x \in \Z_{29}^*\}=\{1,12,17,28\}=\{\pm 1, \pm 12\}$ 
	and hence $P_2=2P_1=\{2,5,24,27\} = \{\pm 2, \pm 5\}$,
	$P_3=3P_1=\{3,7,22,26\}=\{\pm 3, \pm 7\}$, $P_4=4P_1=\{4,10,19,25\}=\{\pm 4, \pm 10\}$, $P_5=6P_1=\{6,14,15,23\}=\{\pm 6, \pm 14\}$, $P_6=8P_1=\{8,9,20,21\}=\{\pm 8, \pm 9\}$, and  $P_7=11P_1=\{11,13,16,18\}=\{\pm 11, \pm 13\}$. 
	One can check that
	\begin{align*}
	& P_1+P_1 = P_0 \cup P_2 \cup P_7, & P_2+P_2 = P_0 \cup P_3 \cup P_4,  && P_3+P_3 = P_0 \cup P_4 \cup P_5, \\
	& P_4+P_4 = P_0 \cup P_5 \cup P_6, & P_5+P_5 = P_0 \cup P_1 \cup P_6,  && P_6+P_6 = P_0 \cup P_1 \cup P_7, \\
	 					  &  & P_7+P_7 = P_0 \cup P_2 \cup P_3,  &&	 
	\end{align*}
and hence condition \eqref{Pijk} holds. 
	\hfill $\lozenge$
\end{exam}

For some metrics with few weights we can assure that they are $\mathcal{P}$-interval. For instance, every $2$-weight metric is $\mathcal{P}$-interval and every $3$-weight metric with a small part is $\mathcal{P}$-interval. We say that a part $P_i$ is \textit{small} if it is of the form 
$$P_i=\{g,g^{-1}\}$$
for some $g\in G$ (or $P_i=\{g\}$ if $g=g^{-1}$), i.e.\@ it is a smallest non-trivial symmetric part. 
Notice that $\Z_{13}$ with the $3$-weight metric of Remark~\ref{Z13noIIM} is not $\mathcal{P}$-interval and it has no small parts. 

\begin{prop} \label{2-weights}
	Let $G$ be a group with unitary symmetric partition $\Prep =\{P_0,P_1,\ldots,P_s\}$.
	If one of the following conditions hold:
	\begin{enumerate}[$(a)$]
		\item $s=2$; \sk
		
		\item $s=3$ and $P_i=\{g,g^{-1}\}$ for some $i\in \{1,2,3\}$; \sk 
		
		\item $s=4$, $P_i=\{g,g^{-1}\}$ with $ord(g)=2,3,$ and there is a part $P_j$ such that $P_i\ast P_j \subset P_i \cup P_j \cup P_k$ for different indices $i,j,k \in \{1,2,3,4\}$; \sk 
		
		\item $s=4$, $P_i=\{g\}$, $P_j=\{h\}$ and $gh, hg\in P_k$ for different indices $i,j,k \in \{1,2,3,4\}$;
	\end{enumerate}
	then there is an interval metric $d$ of $G$ associated to $\mathcal{P}$. 
\end{prop}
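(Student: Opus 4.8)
The plan is to exhibit an explicit interval metric by assigning the consecutive weights $0,1,\dots,s$ to the parts, with $P_0\mapsto 0$, and then checking the triangle inequality. Recall that a weight with $w(P_k)=w_k$ is a metric weight exactly when $w_k\le w_i+w_j$ for every pair $i,j$ and every $k$ with $P_k\cap(P_i\ast P_j)\ne\varnothing$. The key simplification, which I would isolate first, is that this inequality holds automatically as soon as $w_i+w_j\ge s$, because no part is assigned weight exceeding $s$; moreover pairs involving $P_0$ never constrain, since $P_0\ast P_j=P_j$. Hence only pairs of nontrivial parts with $w_i+w_j\le s-1$ can obstruct, and for $s\le 4$ these are very few. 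In particular for $s=2$ every pair of nontrivial parts has weight sum $\ge 2=s$, so no constraint survives and $w_1=1,\ w_2=2$ is already an interval metric, proving $(a)$.

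For $s=3$ the only surviving pair is the weight-$1$ part squared (sum $2$), so I need only arrange that $A\ast A$ avoids the weight-$3$ part, where $A$ denotes whichever part is assigned weight $1$. Here the hypothesis enters: if $P_i=\{g,g^{-1}\}$ then $P_i\ast P_i=\{e,g^2,g^{-2}\}$, and since $g^2$ and $g^{-2}$ are mutual inverses they lie in a single part $P_m$; thus $P_i\ast P_i\subseteq P_0\cup P_m$ meets at most one nontrivial part. Assigning $P_i\mapsto 1$, $P_m\mapsto 2$, and the remaining part $\mapsto 3$ then makes $A\ast A$ avoid the weight-$3$ part, giving $(b)$.

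For $s=4$ exactly two pairs survive: the weight-$1$ part squared (sum $2$, which must land in parts of weight $\le 2$) and the product of the weight-$1$ and weight-$2$ parts (sum $3$, which must avoid the weight-$4$ part). In case $(c)$ the condition $ord(g)\in\{2,3\}$ forces $P_i\ast P_i\subseteq P_0\cup P_i$ --- immediately if $g^2=e$, and because $g^2=g^{-1}\in P_i$ when $ord(g)=3$ --- so the sum-$2$ constraint is met once $P_i$ carries weight $1$; then the hypothesis $P_i\ast P_j\subseteq P_i\cup P_j\cup P_k$ lets me set $P_i,P_j,P_k\mapsto 1,2,3$ and the fourth part $P_l\mapsto 4$, whence the sum-$3$ product $P_i\ast P_j$ avoids the weight-$4$ part. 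Case $(d)$ follows the same pattern: $P_i=\{g\}$ with $g^2=e$ gives $P_i\ast P_i=\{e\}\subseteq P_0$, while $gh,hg\in P_k$ gives $P_i\ast P_j\subseteq P_k$, so again the assignment $P_i,P_j,P_k,P_l\mapsto 1,2,3,4$ works.

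The step that really carries the argument is the opening reduction: once one observes that weight sums $\ge s$ impose no condition, the statement collapses to killing at most two low-sum inequalities, and the hypotheses of $(b)$--$(d)$ are tailored precisely to do so. I do not expect a substantive obstacle beyond this bookkeeping, the only mild care being to treat the degenerate possibilities $P_m=P_i$ in $(b)$ and $g^2=e$ versus $ord(g)=3$ in $(c)$ when running through the short finite list of products.
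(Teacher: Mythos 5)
Your proof is correct and follows essentially the same route as the paper's: assign the consecutive weights $1,\dots,s$ to the nontrivial parts and verify the triangle inequality, using the hypotheses in $(b)$--$(d)$ to place the low-weight products. Your opening reduction (only pairs with $w_i+w_j\le s-1$ can obstruct, and pairs involving $P_0$ never do) is a cleaner packaging of the checks the paper performs implicitly, but the construction and the case analysis are the same.
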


\begin{proof}
 In all items ($a$)--($d$) we will obtain an explicit interval metric. 
	
	\noindent $(a)$ 
	If $G=P_0 \cup P_1 \cup P_2$, just give weights $1$ and $2$ to the parts $P_1$ and $P_2$ (indistinctly).
	There is no way that the triangle inequality fail in this case. 
	
	\noindent $(b)$ 
	Suppose that $G=P_0 \cup P_1 \cup P_2 \cup P_3$.
	Give the weight 1 to the small part. That is, we can assume that $P_1=\{g,g^{-1}\}$ and take $w(P_1)=1$. If $g$ is of order $2$ or $3$ (i.e.\@ $g=g^{-1}$ or $g^2=g^{-1}$) then we give weight $2$ and $3$ to $P_2$ and $P_3$ indistinctly. Now, if $g$ is of order $\ge 4$, then we can assume that $g^2 \in P_2$. Thus, $w(g^2) \le 2w(g)\le 2$ and hence we can take $w(P_2)=2$ and $w(P_3)=3$. 
	
 	\noindent $(c)$ Give weight 1 to $P_i$. Since $ord(g)=2$ or $3$, we have $g^{2}\in P_i$, so we can we give weight 2 to $P_j$. 
 	If $h\in P_j$, by the assumption we have $w(gh)\le 3$ and hence $P_k$ must have weight 3. Now, we can give weight 4 to the remaining part $P_\ell$ without contradicting the triangle inequality. 
 	
 	\noindent $(d)$ This is a particular case of $(c)$, for groups with at least two elements of order 2. Give weight 1 and 2  to $P_i$ and $P_j$. Since $gh$ and $hg$ are in the same part, say $P_k$, we can give weight 3 to $P_k$ and weight 4 to the remaining part $P_\ell$ without violating the triangle inequality. 
\end{proof}

In the remainder of the section we will consider interval metrics on finite groups.
We first need to recall some notation from \cite{PV2}.
Given a finite group $G$ of order $n$ we define the number 
\begin{equation} \label{kG} 
k(G) = \tfrac 12 \big( n+ \#\{ x\in G : x^2 = e \} \big) -1.  
\end{equation}
It is relevant to us since the number of invariant metrics of $G$ is given by $B_{k(G)}$ where $B_m$ is the $m$-th Bell number (see Proposition 5.1 in \cite{PV2}). 
Notice that if $\mathcal{P}(G) = \{ \{e\},P_1,\ldots, P_s\}$ is the \textsl{finest} unitary symmetric partition of $G$ 
(that will be called the Lee partition $\mathcal{P}_{Lee}$ later, see Definition \ref{Lee part}), then 
\begin{equation} \label{s=kG} 
s=k(G).
\end{equation}
That is, $k(G)$ is the number of non-trivial parts of the finest unitary symmetric partition of $G$.

In particular, we want to understand interval metrics on groups $G$ of order $\le 9$. We begin by showing that if $G$ is of order $\le 7$ or $G=\Q_8$, all the invariant metrics are $\mathcal{P}$-interval.

\begin{exam}[\textit{Groups of order $\le 7$}] \label{examG<8}
	For every group $G$ of order $\le 7$ each invariant metric of $G$ is $\mathcal{P}$-interval. One can check that for any such groups the number of non-trivial parts of the unitary symmetric partitions is $s\le 4$. 
	In Examples 4.1--4.6 in \cite{PV2} one can find all these partitions explicitly. 
	For $G=\Z_2, \Z_3, \Z_4$ or $\Z_5$ we have $s\le 2$ and hence by $(a)$ in Proposition~\ref{2-weights} the metrics associated to these partitions are $\mathcal{P}$-interval.
	For $G=\Z_2 \times \Z_2$, $\Z_6$ or $\Z_7$ there are 5 unitary symmetric partitions with $s\le 3$ and only one with $s=3$ in each case. Namely, 
	\begin{gather*}
	\mathcal{P}_{\Z_2\times \Z_2} = \{ \{(0,0)\}, \{(1,0)\},\{(0,1)\},\{(1,1)\} \}, \\ 
	\mathcal{P}_{\Z_6} =\{ \{0\}, \{1,5\},\{2,4\},\{3\} \} \quad \text{and} 
	\quad \mathcal{P}_{\Z_7} =\{ \{0\}, \{1,6\},\{2,5\},\{3,4\} \}.	
	\end{gather*}
	In each case there is at least one small part, and hence by $(b)$ in Proposition~\ref{2-weights} the metrics associated to these partitions are $\mathcal{P}$-interval.
	Finally, $\Sym_3$ has 15 unitary symmetric partitions with $s \le 4$, one with $s=1$, seven with $s=2$, six with $s=3$ (with at least one small part), hence ($a$) and ($b$) of Proposition \ref{2-weights} applies, and 
	only one with $s=4$, namely 
$$\mathcal{P}=\{ \{id\}, \{(12)\}, \{(13)\}, \{(23)\}, \{(123), (132)\}\}.$$ 
Note that if $P_i=\{(12)\}$ and $P_j=\{(13)\}$ then both $(12)(13)=(132)$ and $(13)(12)=(123)$ belong to $P_k=\{(123), (132)\}$ and thus $(d)$ in Proposition~\ref{2-weights} holds. Therefore, all the metrics associated to these partitions are $\mathcal{P}$-interval for all the groups considered. 
	\hfill $\lozenge$
\end{exam}

\begin{exam}[\textit{Quaternion group}] \label{examQ8}
Consider the group of quaternions $\Q_8=\{\pm 1, \pm i, \pm j, \pm k\}$, 
with the relations $i^2=j^2=k^2=ijk=-1$. All the invariant metrics of $\Q_8$ are $\mathcal{P}$-interval. 
The finest unitary symmetric partition of $\Q_8$ is 
$$\mathcal{P}_{Lee} = \{ \{1\}, \{-1\}, \{i,-i\}, \{j,-j\}, \{k,-k\} \}.$$ 
Thus, $k(\Q_8)=4$ and any other unitary symmetric partition 
$\{ \{e\},P_1,\ldots, P_s\}$ 
has $s\le 3$. By Proposition \ref{2-weights}, all the metrics associated to a partition different from $\mathcal{P}_{Lee}$ are $\mathcal{P}$-interval (automatic if $s\le 2$ and one checks that there is at least one small part in each partition with $s=3$).
It remains to show that the metric associated to the Lee partition $\mathcal{P}_{Lee}$ (with $s=4$) is also $\mathcal{P}$-interval. 
In this case, we can use Proposition \ref{2-weights} $(c)$, with $P_i=\{-1\}$ and $P_j$ equal to any of the other parts, 
for instance $P_j=\{-j,j\}$.
\hfill $\lozenge$
\end{exam}

Next we show that $\D_4$ has some non-interval metrics and that $\Z_3\times \Z_3$ has only one non-interval metric.
We notice that we would not be able to use Proposition \ref{prop noIIM}.

\begin{exam} \label{examD4}
Consider the dihedral group of order 4, namely $\D_4 = \{e, \rho, \rho^2, \rho^3, \tau, \rho\tau, \rho^2\tau, \rho^3\tau \}$ where $\rho^4=\tau^2=e$ and $\tau \rho \tau = \rho^3$.
For $0\le i \le 3$, the elements $\rho^i$ are the rotations and $\rho^i \tau$ are the reflections. Let us see that $\D_4$ has some non-interval metrics. 
For instance, the unitary symmetric partition 
$$\mathcal{P} = \big\{ \{e\}, \{\rho,\rho^2,\rho^3\}, \{\tau\}, \{\rho\tau\}, \{\rho^2\tau\}, \{\rho^3\tau\} \big\}$$ 
does not admit an interval metric. 
Note that we cannot apply Proposition \ref{prop noIIM} since either $P_i\ast P_i=P_0$ or $P_i\ast P_i=P_0\cup P_i$ for any part $P_i$ of $\mathcal{P}$.
We prove this by considering two cases: the part $\{\rho,\rho^2,\rho^3\}$ has weight $\le 2$ or has weight $\ge 3$. 
In fact, if $\{\rho,\rho^2,\rho^3\}$ has weight $1$ or $2$, then $\{\rho^t \tau\}$ has weight $2$ or $1$ for some $0\le t \le 3$, respectively. Then, by the triangle inequality, every element would have weight less than or equal to $3$, since 
$$\{\rho,\rho^2,\rho^3\} \cdot \{\rho^t \tau\} = \{\rho^s \tau \} \qquad \text{with $s\in \{0,1,2,3\} \smallsetminus \{t\}$}.$$ 
But this is absurd since there must be $5$ non-zero values for the weights. In the other case, that is if $w(\{\rho,\rho^2,\rho^3\})\ge 3$, there are $2$ involutions with weights $1$ and $2$. Since the product of any two involutions is a rotation, this implies that $\{\rho,\rho^2,\rho^3\}$ must have weight $3$, and the previous reasoning leads us to conclude that every element should have weight less than or equal to $4$.
\hfill $\lozenge$
\end{exam}

\begin{exam} \label{z3xz3}
	The group $\Z_3 \times \Z_3$ has only one non-interval metric, which is the one associated to the finest unitary symmetric partition. 
	Let $P_0,P_1,\ldots,P_s$ be any unitary symmetric partition of $\Z_3 \times \Z_3$. 
	The finest unitary symmetric partition of $\Z_3 \times \Z_3$ is given by 
	$$ \mathcal{P}_{Lee} = \big\{ \{(0,0)\}, \{(1,0),(2,0)\}, \{(0,1),(0,2)\}, \{(1,1),(2,2)\}, \{(2,1),(1,2)\} \big\},$$
	and hence $s\le 4$.  
	If $s \le 3$ then the metric is interval by Proposition~\ref{2-weights}. Notice that in the case $s=3$ the non-trivial parts are necessary of the form $\{ g_1, g_1^{-1} \}$, $\{ g_2, g_2^{-1}\}$ and $\{g_3,g_3^{-1},g_4,g_4^{-1} \}$.
	If $s=4$, since $P_i\ast P_i=P_0 \cup P_i$ for every part in $\mathcal{P}_{Lee}$, then Proposition \ref{prop noIIM} does not apply in this case. Thus, consider a metric $d$ associated with $\mathcal{P}_{Lee}$. 
	Assume that this metric is interval and let $w$ be the associated weight of $d$. 
	Thus, there are some non-zero elements $(a,b), (c,d) \in \Z_3 \times \Z_3$ with $(a,b) \ne \pm (c,d)$ such that $w((a,b))=w((-a,-b))=1$ and $w((c,d))=w((-c,-d))=2$.
	Therefore, 
	$$w((a+c,b+d)) = w((a,b) + (c,d))\le w((a,b))+w((c,d)) \le 3.$$ 
	However, all the possible values for this weight lead to some contradiction.	
	\hfill $\lozenge$
\end{exam}

After the examples it remains to study the abelian groups $\Z_8$, $\Z_4\times \Z_2$, $\Z_2^3$ and $\Z_9$.
It is easy to see that for certain groups, like $\Z_2^{3}$, neither Proposition \ref{prop noIIM} nor Proposition \ref{2-weights} apply, so we need some more results to determine the existence or non-existence of $\mathcal{P}$-interval metrics.

We now give a generalization of Example \ref{examD4}. Recall the number $k(G)$ from \eqref{kG}.

\begin{thm} \label{kGkH}
If a finite group $G$ has a subgroup $H$ of index $2$ and $k(G)-k(H)\geq 4$ then $G$ has a non $\Prep$-interval metric. 
\end{thm}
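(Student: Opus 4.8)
The plan is to generalize the construction of Example \ref{examD4}: I would coarsen all of $H$ into a single non-trivial part while keeping the non-trivial coset $G\smallsetminus H$ as fine as possible, and then show that the triangle inequality forces a gap. First I would reinterpret the hypothesis in terms of the coset. Since $H$ has index $2$ it is normal and $G\smallsetminus H$ is closed under inversion. Using $k(G)=\tfrac12(n+A)-1$ with $A=\#\{x\in G:x^2=e\}$ and the analogous formula for $H$, a direct count gives $k(G)-k(H)=\tfrac n4+\tfrac m2$, where $m=\#\{x\in G\smallsetminus H:x^2=e\}$ (the identity cancels between $A$ and $B$). On the other hand the finest unitary symmetric partition meets $G\smallsetminus H$ in exactly $m$ singletons (the coset involutions) together with $\tfrac12(\tfrac n2-m)$ pairs $\{g,g^{-1}\}$, and $m+\tfrac12(\tfrac n2-m)=\tfrac n4+\tfrac m2$ as well. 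Hence $k(G)-k(H)$ is precisely the number of finest symmetric parts contained in the coset; call these $Q_1,\dots,Q_r$ with $r=k(G)-k(H)\geq 4$.

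I then propose the unitary symmetric partition
\[
\mathcal{P}=\{\{e\},\ H\smallsetminus\{e\},\ Q_1,\dots,Q_r\},
\]
which is symmetric because both $H\smallsetminus\{e\}$ and each $Q_j$ are closed under inversion. Arguing by contradiction, suppose $\mathcal{P}$ admits an interval metric with weight $w$, so that the $r+1$ non-trivial parts carry the distinct values $1,2,\dots,r+1$. Write $a=w(H\smallsetminus\{e\})$ and let $c_1<c_2<\cdots<c_r$ be the weights of the $Q_j$; thus $\{a,c_1,\dots,c_r\}=\{1,\dots,r+1\}$. The two structural facts I will exploit are $H\cdot(G\smallsetminus H)=G\smallsetminus H$ and $(G\smallsetminus H)(G\smallsetminus H)\subseteq H$. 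From the first, fixing a coset element $x_0$ of minimal weight $c_1$, every other coset element equals $hx_0$ for some $h\in H\smallsetminus\{e\}$, so the triangle inequality bounds $w$ by $a+c_1$ on the whole coset, giving inequality (A): $c_r\leq a+c_1$. From the second, choosing $x,y$ in the two lowest-weight coset parts — these are distinct parts, so $y\neq x^{-1}$ and hence $xy\in H\smallsetminus\{e\}$ — the triangle inequality yields inequality (B): $a\leq c_1+c_2$.

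Finally I would run a short case analysis on the position of $a$ inside $\{1,\dots,r+1\}$. If $a=r+1$, then $c_1=1,c_2=2$ and (B) forces $r\leq 2$. If $a=1$, then $c_1=2$ and (A) forces $r\leq 2$. Otherwise $2\leq a\leq r$, so $c_1=1$ and $c_r=r+1$; then (A) gives $r\leq a$, hence $a=r$, and since $r\geq 3$ we have $c_2=2$, so (B) forces $r\leq 3$. Every case contradicts $r\geq 4$, so no interval metric exists and $\mathcal{P}$ is the sought non-$\mathcal{P}$-interval partition. I expect the delicate points to be the bookkeeping in the first step (checking that $k(G)-k(H)$ counts exactly the coset parts) and verifying in (B) that the product genuinely lands in $H\smallsetminus\{e\}$ rather than in $\{e\}$; both follow from the index-$2$ structure. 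The crux is the middle case $a=r$, which is exactly where the bound $\geq 4$ (rather than $\geq 3$) becomes necessary — for $r=3$ the inequalities (A) and (B) are both satisfiable, matching the sharpness of the hypothesis.
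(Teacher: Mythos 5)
Your proposal is correct and follows essentially the same route as the paper: the same coarsened partition $\{\{e\},\,H\smallsetminus\{e\},\,Q_1,\dots,Q_r\}$, and the same two triangle-inequality constraints coming from $H\cdot(G\smallsetminus H)=G\smallsetminus H$ and $(G\smallsetminus H)(G\smallsetminus H)\subseteq H$, merely organized as a case analysis on the value of $a$ rather than on whether $a\le 2$ or $a\ge 3$. Your bookkeeping showing that the number of finest symmetric parts in the coset is exactly $k(G)-k(H)$ is in fact slightly more careful than the corresponding step in the paper.
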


\begin{proof}
Note that $G \smallsetminus H$ is not a subgroup, but it has symmetric partitions (necessarily not unitary) since if $g \in G \smallsetminus H$ then $g^{-1} \in G \smallsetminus H$ also. Thus, we can consider the unitary partition 
$$\mathcal{P}_{G} = \{ \{e\}, H \smallsetminus\{e\}, P_1, \ldots, P_s \}$$ 
where $P_1, \ldots, P_s$ are the non-trivial parts of the finest symmetric partition of $G\smallsetminus H$, say $\mathcal{P}_{G \smallsetminus H} = \{ P_1, \ldots, P_s \}$. Notice that $\mathcal{P}_{G}$ is symmetric since $H$ is a subgroup. Also, by  \eqref{s=kG} and hypothesis, since $H$ is not trivial we have that 
$$s = k(G)-1 \ge k(G) - k(H) \ge 4.$$
We will show that any metric $d$ associated to $\mathcal{P}_{G}$, with weight $w$, is non-interval. 

Assume, by contradiction, that $d$ is an interval metric with weight $w$ associated to $\mathcal{P}_{G}$.
Suppose first that $w(H \smallsetminus \{e\})=1$ or $2$. Then, there exists an element $g_0 \notin H$ such that 
$w(\{g_0,g_0^{-1}\})=2$ or $1$, respectively. 
Notice that $G\smallsetminus H=gH$ (since $H$ is of index 2) and 
$$(H \smallsetminus \{e\}) \cdot \{g_0,g_0^{-1}\} \supseteq g_0H.$$
Thus, by the triangle inequality we have that $w(g) \le 3$ for every $g \in G\smallsetminus H$. 
This is a contradiction since $s\ge 4$.

Now, suppose that $w(H \smallsetminus \{e\})\ge 3$. Then, there are elements $g_1, g_2  \notin H$, such that 
$w(\{g_1,g_1^{-1}\})=1$ and $w(\{g_2,g_2^{-1}\})=2$. Since $H$ is of index 2 we have that $g_1 H=g_2^{-1}H$ and hence $g_1g_2 \in H \smallsetminus \{e\}$. 
By the triangle inequality we have that $w(g_1g_2)\le 3$ and hence $w(g_1g_2)=3$. This implies that $w(H \smallsetminus \{e\})=3$.
Finally, note that $(H \smallsetminus \{e\}) \cdot \{g_1,g_1^{-1}\} \supseteq g_1 H$ and thus $w(g_1 H)\le 4$. Since $H$ is of index 2 we have that $g_1H=G\smallsetminus H$. Therefore, the weight of any element in $G$ is $\le 4$. This is a contradiction 
since $s\ge 4$, and hence there are at least 5 parts in the partition.
\end{proof}

With this result, and using that we have studied the numbers $k(G)$ in \cite{PV2} for several groups, we can assert that some families of groups have some non-interval metrics.

\begin{coro} \label{ZnDnQnSn}
The following groups have some non $\Prep$-interval metrics:
\begin{enumerate}[$(a)$]
	\item The cyclic groups $\Z_{2n}$ for $n\ge 7$, the dihedral groups $\D_n$ for $n\ge 4$, 
	the dicyclic groups $\Q_{4n}$ for $n\ge 3$ and the symmetric groups $\Sym_n$ for $n\ge 4$. \sk 

	\item The groups $\Z_2^r$ for any $r\ge 3$ and $G\times \Z_2^r$ for any non-trivial group $G$ and $r\ge 2$. 
\end{enumerate}
\end{coro}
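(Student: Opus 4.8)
The plan is to reduce every case to Theorem~\ref{kGkH}: for each group $G$ in the list I will exhibit a subgroup $H$ of index $2$ and check that $k(G)-k(H)\ge 4$, whence $G$ carries a non $\Prep$-interval metric. To make the bookkeeping uniform, write $\iota(G)=\#\{x\in G: x^2=e\}$ for the number of elements of order dividing $2$, so that $k(G)=\tfrac12(|G|+\iota(G))-1$ by \eqref{kG}. Since $H$ has index $2$ we have $|G|=2|H|$, and therefore
\begin{equation*}
k(G)-k(H)=\tfrac12\big(|H|+\iota(G)-\iota(H)\big).
\end{equation*}
Thus for each family it is enough to pick $H$, count the elements of order $\le 2$ in $G$ and in $H$, and verify that the right-hand side reaches $4$ exactly at the stated threshold.

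Next I would treat the families in turn. For $\Z_{2n}$ take $H=\Z_n$, the unique index-$2$ subgroup; a cyclic group has one or no involution according as its order is even or odd, so $\iota(\Z_{2n})=2$ and $\iota(\Z_n)\in\{1,2\}$, giving $k(\Z_{2n})-k(\Z_n)=\tfrac n2$ ($n$ even) or $\tfrac{n+1}2$ ($n$ odd), which is $\ge 4$ precisely for $n\ge 7$. For $\D_n$ take $H=\langle\rho\rangle\cong\Z_n$; counting the $n$ reflections, the identity, and the central rotation $\rho^{n/2}$ when $n$ is even yields $\iota(\D_n)-\iota(\Z_n)=n$ in either parity, so $k(\D_n)-k(\Z_n)=n\ge 4$ for $n\ge 4$. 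For $\Sym_n$ take $H=A_n$; every involution of $A_n$ is one of $\Sym_n$, so $\iota(\Sym_n)-\iota(A_n)\ge 0$, and $|A_n|=n!/2\ge 12$ for $n\ge 4$ forces $k(\Sym_n)-k(A_n)\ge 6$. For part $(b)$, take $H=\Z_2^{r-1}$ in $\Z_2^r$, where $\iota(\Z_2^r)=2^r$ gives $k(\Z_2^r)-k(\Z_2^{r-1})=2^{r-1}\ge 4$ for $r\ge 3$; and take $H=G\times\Z_2^{r-1}$ in $G\times\Z_2^r$, where $\iota(G\times\Z_2^r)=2^r\iota(G)$ gives $k(G\times\Z_2^r)-k(G\times\Z_2^{r-1})=2^{r-2}(|G|+\iota(G))\ge 2^r\ge 4$ for $r\ge 2$, using that $|G|+\iota(G)\ge 4$ for any non-trivial $G$. (For infinite $G$ one does not need $k$ at all: the construction in the proof of Theorem~\ref{kGkH} applies verbatim, since $H=G\times\Z_2^{r-1}$ has index $2$ and its complement carries at least four symmetric parts, which is all that proof uses.)

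The dicyclic family is where this mechanism falls just short, and it is the main obstacle. In $\Q_{4n}$ the only involution is the central element $a^n$, so $\iota(\Q_{4n})=2$, and with $H=\langle a\rangle\cong\Z_{2n}$ one gets $k(\Q_{4n})-k(\Z_{2n})=n$, which is $\ge 4$ only for $n\ge 4$. Hence Theorem~\ref{kGkH} settles $\Q_{4n}$ for $n\ge 4$ but misses the boundary case $\Q_{12}$ ($n=3$). For $\Q_{12}$ I would argue by hand, in the spirit of Example~\ref{examD4}, using the partition $\{\{e\},\,\langle a\rangle\smallsetminus\{e\},\,P_1,P_2,P_3\}$, where $P_1,P_2,P_3$ are the three inverse-pairs of order-$4$ elements lying outside $\langle a\rangle$. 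Although this partition has only four non-trivial parts, so Theorem~\ref{kGkH} does not apply, the multiplicative structure still forces a gap: each square $P_i\ast P_i$ contains the central involution $a^3\in\langle a\rangle\smallsetminus\{e\}$, while the product of $\langle a\rangle\smallsetminus\{e\}$ with any $P_i$ meets both of the other two parts. Writing $r$ for the weight of $\langle a\rangle\smallsetminus\{e\}$, the first inclusion gives $r\le 2w(P_i)$ for every $i$ and the second gives $w(P_k)\le r+w(P_i)$ for all $i\ne k$; a short case check over the four possible values of $r$ shows that no assignment of the values $1,2,3,4$ to the four parts satisfies all of these simultaneously, so the partition admits no interval metric. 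This completes the dicyclic case and, with it, the corollary.
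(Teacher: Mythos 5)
Your argument is correct and, for every case except the dicyclic boundary case, it is the same argument as the paper's: pick an index-$2$ subgroup $H$ and apply Theorem~\ref{kGkH} after checking $k(G)-k(H)\ge 4$. The only cosmetic difference there is that you recompute the differences from the identity $k(G)-k(H)=\tfrac12\big(|H|+\#\{x\in G:x^2=e\}-\#\{x\in H:x^2=e\}\big)$, while the paper quotes closed formulas for $k$ of cyclic, dihedral, dicyclic, symmetric, alternating and $G\times\Z_2^r$ groups from its earlier work \cite{PV2}; the resulting numbers agree. The substantive difference is $\Q_{12}$. You are right that Theorem~\ref{kGkH} cannot reach it: the paper's own proof chooses $H=\Z_{2n}$ or $\Q_{2n}$ and asserts $k(\Q_{4n})-k(H)=n\ge 4$, which only covers $n\ge 4$, whereas the statement claims $n\ge 3$; and since $\langle x\rangle\cong\Z_6$ is the unique index-$2$ subgroup of $\Q_{12}$, with $k(\Q_{12})-k(\Z_6)=3$, no choice of $H$ rescues the argument. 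So the paper's proof has a genuine gap at $n=3$ which your hand argument repairs. I checked it: with the partition $\big\{\{e\},\,\langle x\rangle\smallsetminus\{e\},\,P_1,P_2,P_3\big\}$ one has $P_iP_i=\{e,x^3\}$, giving $r\le 2w(P_i)$ for all $i$, and $(\langle x\rangle\smallsetminus\{e\})P_i=P_1\cup P_2\cup P_3$, giving $w(P_k)\le r+w(P_i)$ for $k\ne i$; if $r\ge 3$ the first constraint fails against the part of weight $1$, and if $r\le 2$ the second fails with $\max_k w(P_k)=4$ and $\min_i w(P_i)\le 2$, so no interval assignment of $\{1,2,3,4\}$ exists. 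Your version of the corollary's proof is therefore slightly more complete than the paper's.
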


\begin{proof}
($a$) 
For the cyclic group $G=\Z_{2n}$ consider the subgroup $H=\Z_n$ of index 2. In Example~5.2 in \cite{PV2} we proved that $k(\Z_n)=[\tfrac n2]$, hence $k(\Z_{2n}) -k(\Z_n)\ge 4$ for any $n\ge 7$.  	

In Proposition 5.5 of \cite{PV2} we showed that $k(\D_{2k})=3k$, $k(\D_{2k+1})=3k+1$ and $k(\Q_{4n})=2n$.	
For the dihedral group $G=\D_n = \langle \rho, \tau \rangle$, where $\rho^n=\tau^2=id$, 
consider the subgroup $H=\langle \rho \rangle \simeq \Z_n$ of pure rotations of index 2. 
Hence $k(\D_n)-k(\Z_n) \ge 4$ for any $n\ge 4$.
For the dicyclic group $G=\Q_{4n}$ we take a subgroup $H$ of index 2 (if $n$ is even take $H=\Q_{2n}$ while if $n$ is odd take $H=\Z_{2n}$). In any case we have  $k(\Q_{4n}) - k(H)=n\ge 4$.

For the symmetric group $G=\Sym_n$ we take the subgroup $H=\mathbb{A}_n$ of index 2. In Proposition~5.6 in \cite{PV2} we proved that $k(\Sym_n)=\tfrac 12 n! (s_n+1)-1$ and $k(\mathbb{A}_n)=\tfrac 12 n! (a_n+\tfrac 12)-1$ where  
$$s_n = \sum_{0\le k \le [\frac n2]} \frac{1}{2^k k! (n-2k)!}$$ 
and $a_n$ is the same sum as for $s_n$ but adding only over even indices $k$. It is clear that we have $k(\Sym_n)-k(\mathbb{A}_n) =\tfrac 12 n!(s_n -a_n+\tfrac 12) \ge 4$ for $n\ge 4$, since $s_n>a_n$.

($b$) In Lemma 5.3 in \cite{PV2} we showed that $k(G\times \Z_2^r) = 2^r(k(G)+1)-1$. For the group $G\times \Z_2^r$ with $r\ge 1$ we take the index 2 subgroup $H=G\times \Z_2^{r-1}$. Thus, 
$$k(G\times \Z_2^r) - k(G\times \Z_2^{r-1}) = 2^{r-1} (k(G)+1)$$ 
and this is $\ge 4$ if $G$ is trivial and $r\ge 3$ (since $k(0)=0$) or $r\ge 2$ for any non-trivial group $G$ since $k(G)\ge 1$ in this case. 
\end{proof}

A more general consequence of Theorem \ref{kGkH} is the following one.

\begin{coro}  \label{G>16}
Let $G$ be a finite group. If $|G|\ge 16$ and $G$ has a subgroup of index $2$ then $G$ has non-interval metrics. In particular, any abelian group of even order $2n$ with $n\ge 8$ has non $\Prep$-interval metrics.
\end{coro}

\begin{proof}
If $H$ is an index 2 subgroup of $G$ and $n$ denotes the order of $G$, then $H$ has order $\tfrac n2$ and, from the expression \eqref{kG}, we have 
\begin{align*}
k(G) & = \tfrac 12 \big( n + \#\{ x\in H : x^2 = e \} + \#\{ x\in G \smallsetminus H : x^2 = e \} \big) -1 \\
	 & = k(H) + \tfrac n4 + \tfrac 12 \#\{ x\in G \smallsetminus H : x^2 = e \} \ge k(H) + \tfrac n4.
	\end{align*}
Hence, $k(G)-k(H) \ge \tfrac n4$, from where the statement follows by Theorem \ref{kGkH} for any $n \ge 16$.
The remaining assertion is clear.
\end{proof}
	
Note that this corollary explains almost all cases in Corollary \ref{ZnDnQnSn} except for $\Z_{14}$, $\D_4$, $\D_5$, $\D_6$, $\D_7$,  $\Q_{12}$, $\Z_2^3$, $\Z_3 \times \Z_2^2$ and $G\times \Z_2$ with $G=\Z_3, \Z_4, \Z_5, \Z_6, \D_3$ and $\Z_7$.

\begin{exam}
As non-abelian examples of groups of even order, consider the quasidihedral groups $Q\D_n^\pm$ of order $2^n$ 
(also called semidihedral and denoted $S\D_{2^n}^{\pm}$) defined for every $n\ge 4$ by  
$$Q\D_n^\pm =  \langle x,y : x^{2^{n-1}}=y^{2}=e, yxy=x^{2^{n-2} \pm 1}\rangle.$$ 
The cyclic subgroup $H=\langle x \rangle$ is of order $2^{n-1}$ and hence of index 2 in both groups $Q\D_n^\pm $. In this way, by Corollary \ref{G>16}, the groups $Q\D_n^\pm$ have some non-interval metrics.  
\hfill $\lozenge$
\end{exam}

It is known that if the groups $G$ and $H$ have some interval metric then $G\times H$ has an interval metric given by the weight (see Proposition 12 in \cite{Ba95}):
\begin{equation} \label{prod weight}
w_{G\times H}(g,h) = (1 + \max_{h \in H} \{w_H(h) \}) \, w_{G}(g)  + w_{H}(h),
\end{equation}
where $w_G$ and $w_H$ are the weights in $G$ and $H$, respectively. 
Alternatively, one can also take 
$w_{G\times H}(g,h) = w_{G}(g) + (1 + \max_{g \in G}\{w_G(g)\} ) \, w_{H}(h)$.
However, not every interval metric of $G \times H$ comes from some interval metrics of $G$ and $H$, respectively.

Notice that the groups of the form $G\times \Z_2^r$ for some non-trivial group $G$ always have some non-interval metrics by ($b$) of Corollary \ref{ZnDnQnSn} if $r\ge 2$ and for $|G|\ge 8$, by Corollary \ref{G>16}, if $r=1$.

To finish the section, we complete the study of interval metrics on the groups of order $\le 9$.
\begin{rem} \label{remarquito}
	In Examples \ref{examG<8}--\ref{z3xz3} we have studied the existence or not of interval metrics on small groups $G$. We have seen that for all the groups of order $\le 7$ and $\Q_8$ all the metrics are $\mathcal{P}$-interval. We have also seen that $\D_4$ and $\Z_3 \times \Z_3$ have some non $\mathcal{P}$-interval metrics ($\Z_3^2$ has only one such metric).   	
	Now we can finish the study of $\mathcal{P}$-interval metrics for the groups of order $\le 9$. 
	First, notice that all the metrics associated to unitary symmetric partitions of $\Z_4\times \Z_2$ are $\mathcal{P}$-interval. 
	We have that $k(\Z_4 \times \Z_2)=5$. By Proposition \ref{2-weights}, all the unitary symmetric partitions of $\Z_4 \times \Z_2$ with $s\le 4$ are $\mathcal{P}$-interval (check). The one with $s=5$ (the Lee partition) is also $\mathcal{P}$-interval since in the next section we will show that gives rise to a Lee metric, which is interval by definition. 
	On the other hand, $\Z_2^3$ have some non $\mathcal{P}$-interval metrics by Corollary~\ref{ZnDnQnSn}. For the groups $\Z_8$ and $\Z_9$ the finest unitary symmetric partitions have $s=4$, and they are given by 
	$$\mathcal{P}_{\Z_8} = \{\{0\}, \{1,7\}, \{2,6\}, \{3,5\}, \{4\}\} \quad \text{and} \quad 
	\mathcal{P}_{\Z_9} = \{\{0\}, \{1,8\}, \{2,7\}, \{3,6\}, \{4,5\}\}.$$ 
	For both partitions we can use Proposition \ref{2-weights} $(c)$ to check that they are $\Prep$-interval.
 	Moreover, one can check that these partitions admit the $\mathcal{P}$-interval metrics of $\Z_n$ given by 
	$$w(\{i,n-i\})=i$$ 
	with $i=1,2,3,4$ for $n=8,9$ (these are the classic Lee metrics on cyclic groups, see \eqref{Lee metric}). 
	So, we can see by Proposition \ref{2-weights}, that all the metrics on $\Z_8$ and $\Z_9$ are $\Prep$-interval.    
\end{rem}

\section{Lee metrics on groups}
We recall that there is a classic Lee metric defined on finite cyclic groups. In $\Z_m$, its weight is given by 
$w_{Lee}(\bar y) = w_{Lee}(y \!\mod m)$ where 
\begin{equation} \label{Lee metric}
w_{Lee} (x) =\min \{x,m-x\}
\end{equation}
for $x \in \{0,1,\ldots,m-1\}$. Note that this is an interval metric taking the values $0,1, \ldots, \lfloor \tfrac m2 \rfloor $ and it is associated with the finest unitary symmetric partition 
$$\{ \{0\}, \{\pm 1 \}, \{\pm 2 \}, \ldots, \{\pm  \lfloor \tfrac m2 \rfloor \}  \}$$ 
of $\Z_m$. For instance, we can take 
$$w(\{\pm i\})=i$$ 
for any $i=0,1,\ldots,\lfloor \tfrac m2 \rfloor$. Notice that $w(\{\pm i\})=i$ for $i \in \Z$ defines a `Lee metric' on $\Z$, that corresponds  to the usual norm on $\Z$.

In this section we will generalize the Lee metric to arbitrary groups. 
We will show that some families of groups such as dihedral, quaternionic and dicyclic groups always have Lee metrics. In the next section we will study groups not allowing Lee metrics. 

If $\mathcal{P}_1$, $\mathcal{P}_2$ are partitions of $G$, then we say that
$\mathcal{P}_1$ is finer than $\mathcal{P}_2$ if for all $P \in \mathcal{P}_1$
there exists a set $Q \in \mathcal{P}_2$ such that $P \subseteq Q$. We will denote this relation by $\mathcal{P}_1 \preceq \mathcal{P}_2$.
The partial order $\preceq$ induces a lattice of partitions of $G$.
Clearly, the 1-1 correspondence $\mathcal{M}(G)/_{\sim_\Prep} \leftrightarrow \mathcal{P}(G)$
between $\mathcal{P}$-classes of right-invariant metrics of $G$ and unitary symmetric partitions $\mathcal{P}$ of $G$ (see Proposition 2.5 in \cite{PV2}) allows us to define a partial order on $\mathcal{M}(G)/_{\sim_\Prep}$ that we still denote by $\preceq$. 
We will denote by $\mathcal{L}(G)$ the \textit{lattice of (unitary symmetric) partitions} of $G$ given by $(\mathcal{M}(G)/_{\sim_\Prep}, \preceq)$. 
For instance, if $G=\Z_n$, then the Lee metric and the Hamming metric are representatives of the $\Prep$-classes corresponding to the minimum and the maximum of the lattice.

In general, for an arbitrary group $G$, the maximum in $\mathcal{L}(G)$ is associated to the Hamming metric. One could ask which  invariant metric is associated to the minimum in $\mathcal{L}(G)$, or, in other words, with the finest partition. 
In \cite{Ba95} the author considers this question (although more specifically for interval metrics) and answers it affirmatively for abelian groups. That is, one can associate an interval metric to the finest partition of an abelian group.

\begin{defi} \label{Lee part}
The finest unitary symmetric partition of a group $G$ will be called the \textit{Lee partition} of $G$ and denoted by $\mathcal{P}_{Lee}(G)$ or simply $\mathcal{P}_{Lee}$. It is given by
$$\mathcal{P}_{Lee}(G) = \big\{ \{a,a^{-1}\} : a\in G \} \big \}$$
or, more precisely, by 
$\mathcal{P}_{Lee}(G) = \big\{ \{e\}, \{a\} , \{b,b^{-1}\} : a,b\in G, a^2=e, a\ne e, b^2 \ne e \} \big \}$.
An interval metric $d$ on $G$ is said to be a \textit{Lee metric} on $G$ if its associated partition is the Lee partition
$\mathcal{P}_{Lee}(G)$. We will denote by $d_{Lee}$ any metric associated to the Lee partition.
\end{defi}
So, the usual Lee metric on $\Z_n$ (and its generalization to $\Z$) is a Lee metric with our definition in terms of partitions. 
In other words, the Lee partition of a group $G$ defines a Lee metric on $G$ if there is an interval metric associated to this partition. Note that this implies that $G$ must be countable (finite or not).
Not every group admits a Lee metric, since not every metric associated to the Lee partition is interval.
Notice that, by \eqref{s=kG}, if $G$ is finite we have 
$$\# \mathcal{P}_{Lee} = k(G)+1.$$ 
We notice that, in our previous work \cite{PV2}, we computed the number $k(G)$ for all finite groups of small order $n\le 32$. Also, we obtained explicit expressions for $k(G)$ for any abelian group, for some infinite families of non abelian groups such as dihedral, dicyclic and semidihedral groups of any order, and for some special semidirect products. For symmetric and alternating groups we got some non explicit but computable expressions (see Sections 5 and 7 of \cite{PV2}).

We now present two general results on groups admitting Lee metrics.

\begin{prop} \label{some Lee}
	If the group $G$ has a Lee metric then $G \times \Z_2^{k}$ has a Lee metric for any $k\in \N$.
\end{prop}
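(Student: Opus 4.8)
The plan is to reduce to the case $k=1$, build the Lee metric on the product explicitly from the one on $G$, and then iterate. First I would fix a Lee metric $d$ on $G$ with associated weight $w$, which by Definition \ref{Lee part} attains each value in $\{0,1,\dots,n\}$ (or all of $\N_0$ when $G$ is infinite) on exactly one part $\{g,g^{-1}\}$ of the Lee partition; in particular $w(g)=w(h)$ holds if and only if $h\in\{g,g^{-1}\}$. Writing $\Z_2=\{0,1\}$, the crucial structural observation is that every non-identity element of $\Z_2^k$ is its own inverse, so $(g,v)^{-1}=(g^{-1},v)$ and therefore
$$\mathcal{P}_{Lee}(G\times\Z_2^k)=\big\{\{(g,v),(g^{-1},v)\}:g\in G,\ v\in\Z_2^k\big\},$$
that is, $2^k$ disjoint ``layers'', each a copy of $\mathcal{P}_{Lee}(G)$.

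For $k=1$ I would take $H=\Z_2$ with its Lee weight $w_H(0)=0$, $w_H(1)=1$ (of maximum value $1$) and apply the product weight \eqref{prod weight}, which produces the interval weight
$$W(g,\epsilon)=2\,w(g)+\epsilon,\qquad \epsilon\in\{0,1\}.$$
Since $G$ has a Lee (hence interval) metric and $\Z_2$ does too, the cited result guarantees that $W$ is a genuine weight; this is also immediate directly, as positivity holds because only $(e,0)$ has weight $0$, symmetry because $w(g^{-1})=w(g)$ and $-\epsilon=\epsilon$ in $\Z_2$, and the triangle inequality because $2w(gg')\le 2w(g)+2w(g')$ together with $(\epsilon+\epsilon'\bmod 2)\le\epsilon+\epsilon'$.

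The key step is then to check that $W$ realizes the Lee partition and is gapless. For the partition, $W(g,\epsilon)=W(g',\epsilon')$ forces the parities to agree, so $\epsilon=\epsilon'$, and then $w(g)=w(g')$, i.e.\ $g'\in\{g,g^{-1}\}$; thus the parts of $W$ are exactly $\{(g,\epsilon),(g^{-1},\epsilon)\}$, which is $\mathcal{P}_{Lee}(G\times\Z_2)$. For gaplessness, the values $W(g,0)=2w(g)$ sweep out the even numbers $0,2,\dots,2n$ and $W(g,1)=2w(g)+1$ the odd numbers $1,3,\dots,2n+1$, so $W$ attains every value in $\{0,1,\dots,2n+1\}$ (and all of $\N_0$ in the infinite case). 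Hence $W$ is a Lee metric on $G\times\Z_2$.

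Finally, for general $k$ I would iterate: since the factor $1+\max w_{\Z_2}$ equals $2$ at every stage, applying the $k=1$ construction to $G\times\Z_2^{\,j-1}$ (which by induction already carries a Lee metric) and then to $\Z_2$ yields after $k$ steps the explicit weight
$$W(g,v_1,\dots,v_k)=2^k w(g)+\sum_{i=1}^k 2^{\,k-i}v_i.$$
Here the low $k$ binary digits encode $v\in\Z_2^k$ while $2^k w(g)$ is a multiple of $2^k$, so the same two verifications go through verbatim: $W(g,v)=W(g',v')$ iff $v=v'$ and $g'\in\{g,g^{-1}\}$, and $W$ attains every value in $\{0,1,\dots,(n+1)2^k-1\}$. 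I do not expect a genuine obstacle; the only point requiring care is confirming that the $2^k$-scaling leaves precisely enough room to insert the $\Z_2^k$-coordinate without merging distinct parts or creating gaps, which is exactly what the product weight \eqref{prod weight} arranges.
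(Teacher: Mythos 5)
Your proposal is correct and follows essentially the same route as the paper: reduce to $k=1$, apply the product weight \eqref{prod weight} with $H=\Z_2$ to get $W(g,\epsilon)=2w_G(g)+\epsilon$, check it is interval and realizes $\mathcal{P}_{Lee}(G\times\Z_2)$, then induct on $k$. The only cosmetic difference is that you identify the partition directly via the parity argument, whereas the paper counts parts using $k(G\times\Z_2)=2k(G)+1$; both are fine.
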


\begin{proof}
Let us see that we can associate an interval metric to the Lee patition $\mathcal{P}_{Lee}(G\times \Z_2^k)$ of the product $G\times \Z_2^k$. 	We will prove this by induction on $k$. 
	
For the base step, first notice that 
$$\mathcal{P}_{Lee}(G \times \Z_2) = \mathcal{P}_{Lee}(G) \times \mathcal{P}_{Lee}(\Z_2),$$ 
since each part of the Lee partition $\mathcal{P}_{Lee}$ of $G\times \Z_2$ is of the form 
$$\{(g,h),(g^{-1},h)\} = \{g,g^{-1}\} \times \{h\}$$ 
with $g\in G, h\in \Z_2$. 
Now let $w_G$ and $w_{\Z_2}$ be the interval weights associated to the Lee metrics on $G$ and $\Z_2$.  
Then, by \eqref{prod weight} with $H=\Z_2$, a weight on $G\times \Z_2$ is given by 
	$$w_{G\times \Z_2}((g,h)) =  2 w_G(g) + w_H(h).$$
	It is clearly an interval weight on $G\times \Z_2$ since $w_H$ takes the values $0,1$ and hence $w_{G\times \Z_2}$ takes the values $0,1,\ldots, 2k(G)+1$. 
	Since $k(G\times\Z_2)=2k(G)+1$ by Lemma 5.1 in \cite{PV2}, we can deduce that the partition associated with $w_{G\times \Z_2}$ has maximum size and thus corresponds to $\mathcal{P}_{Lee}(G \times \Z_2)$, hence $G\times \Z_2$ has a Lee metric.
	 Now, to define a Lee metric on $G\times \Z_2^k$ we use induction on the groups $G \times \Z_2^{k-1}$ and $\Z_2$.
\end{proof}

We know that a cyclic group $G$, finite or not, always has a Lee metric. If $G$ has odd order then the converse also holds. 
\begin{thm} \label{odd order}
	Let $G$ be a finite group of odd order. 
	Then, $G$ has a Lee metric if and only if $G$ is cyclic.
\end{thm}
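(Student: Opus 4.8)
The plan is to prove the two implications separately, with essentially all of the work in the converse. For the forward direction, if $G$ is cyclic of odd order $n$ then the classical Lee weight $w_{Lee}(x)=\min(x,n-x)$ of \eqref{Lee metric} realizes the Lee partition $\{\{0\},\{\pm1\},\ldots,\{\pm m\}\}$ and is interval by construction, so $G$ has a Lee metric and nothing more is needed. For the converse, suppose $G$ has odd order $n$ and admits a Lee metric with weight $w$. The first thing I would record is the rigidity forced by odd order: since $G$ has no involutions, $\#\{x:x^2=e\}=1$, so by \eqref{kG} and \eqref{s=kG} the Lee partition consists of $\{e\}$ together with exactly $m:=k(G)=\tfrac{n-1}2$ doubletons $\{a,a^{-1}\}$. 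As the metric is interval, $w$ attains every value $0,1,\ldots,m$, and hence the ball $B_r:=\{x\in G:w(x)\le r\}$ has exactly $|B_r|=2r+1$ elements for each $0\le r\le m$.

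The goal is then to show that the element $g$ of weight $1$ generates $G$, which forces $G$ to be cyclic. Set $H=\langle g\rangle$ and $d=|H|$. Combining $w(g^k)\le k$ with $w(g^k)=w(g^{d-k})\le d-k$ yields $w(g^k)\le\min(k,d-k)\le\tfrac{d-1}2$, so $H\subseteq B_{(d-1)/2}$; since $|H|=d=2\cdot\tfrac{d-1}2+1=|B_{(d-1)/2}|$, we conclude $H=B_{(d-1)/2}$ exactly. Now assume, for contradiction, that $H\ne G$. Because $n$ is odd, $[G:H]=n/d$ is an odd integer $>1$, hence $\ge 3$, so $d\le n/3<n/2$ and therefore $d\le m$, which guarantees $|B_d|=2d+1$.

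Next I would choose $y$ with $w(y)=\tfrac{d+1}2$; such an element exists because $\tfrac{d+1}2\le m$, and $y\notin H$ precisely because $H=B_{(d-1)/2}$. By the triangle inequality, every $a\in H$ satisfies $w(ay)\le w(a)+w(y)\le\tfrac{d-1}2+\tfrac{d+1}2=d$, and likewise $w(ay^{-1})\le d$, so the three right cosets $H$, $Hy$, and $Hy^{-1}$ are all contained in $B_d$. The crux of the whole argument—and the step where oddness is indispensable—is that these cosets are pairwise distinct. Clearly $H\ne Hy,Hy^{-1}$ since $y,y^{-1}\notin H$, while $Hy=Hy^{-1}$ is equivalent to $y^2\in H$; but $y$ has odd order $o$, so $2$ is invertible modulo $o$ and $y=(y^2)^{(o+1)/2}\in\langle y^2\rangle$, whence $y^2\in H$ would force $y\in H$, a contradiction. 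Thus $H,Hy,Hy^{-1}$ are three disjoint cosets, producing $3d$ distinct elements all lying in $B_d$, so $3d\le|B_d|=2d+1$, impossible for $d\ge 1$. This contradiction gives $H=G$, so $g$ generates $G$ and $G$ is cyclic.

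I expect the main difficulty to lie not in any single calculation but in locating the decisive inequality: the purely metric data is deceptively weak. Indeed, each $g$-orbit (a coset of $H$) is a cycle along which $w$ changes by at most $1$, so each coset carries a contiguous block of weight values, and at the level of weight multiplicities this pattern is combinatorially realizable even for non-cyclic groups—for instance it is consistent for $\Z_3\times\Z_3$, which nevertheless has no Lee metric, as Example~\ref{z3xz3} shows by passing to the actual group law. Hence a counting-by-spheres argument alone cannot close the gap; the effective input is the volume bound $|B_d|=2d+1$ applied simultaneously to $H$, $Hy$, $Hy^{-1}$, and this is usable only once the odd-order implication $y^2\in H\Rightarrow y\in H$ guarantees that the three cosets are genuinely distinct.
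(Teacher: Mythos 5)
Your proof is correct, but it takes a genuinely different route from the paper's. The paper argues locally: it picks $a$ of weight $1$, takes $b$ of minimal weight $k$ outside $\langle a\rangle$, observes that the four products $ab,ba,a^{-1}b,b^{-1}a^{-1}$ all lie outside $\langle a\rangle$, are distinct from $b^{\pm1}$, and hence must all have weight $k+1$, i.e.\@ must collapse into a single part $\{c,c^{-1}\}$ of the Lee partition; a short case analysis ($ab=ba$ versus $ab\ne ba$) then produces either an involution or an element conjugate to its inverse, both impossible in a group of odd order. Your argument is instead a global volume count: from \eqref{kG} and \eqref{s=kG} you get $|B_r|=2r+1$, you identify $H=\langle g\rangle$ with the ball $B_{(d-1)/2}$, and if $H\ne G$ you pack the three distinct cosets $H$, $Hy$, $Hy^{-1}$ (distinct precisely because $y^2\in H\Rightarrow y\in H$ in odd order) into $B_d$, giving $3d\le 2d+1$. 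What your approach buys is self-containment: it avoids the classical fact that odd-order groups have no nontrivial real elements, which the paper invokes without proof, and it pinpoints the ball-volume rigidity $|B_r|=2r+1$ as the engine, which is a nice conceptual gain. What the paper's approach buys is reusability: the same local template (minimal-weight element outside a distinguished subgroup, four products forced into one part) is exactly what is generalized in Theorem \ref{non abelian cond} to infinite and even-order groups, where your finite ball counts are unavailable. One cosmetic slip: $3d\le 2d+1$ fails only for $d\ge 2$, not ``for $d\ge 1$'' as you wrote; this is harmless since $w(g)=1$ forces $g\ne e$ and $d$ odd, hence $d\ge 3$.
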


\begin{proof}
	We only have to prove that if $G$ has a Lee metric then it must be a cyclic group. 
	
	Assume, by contradiction, that $G$ is not cyclic. 
	Let $\Prep=\{\{e\},\{a,a^{-1}\},\ldots\}$ be the Lee partition of $G$ and suppose that there exists an interval metric $w$ on $G$ associated to $\Prep$.
	Without loss of generality we can assume that $w(a)=w(a^{-1})=1$. Consider the set 
	$$A = \{ x\in G \,:\, x\not\in \langle a \rangle \} = G\smallsetminus \langle a \rangle,$$ 
	which is not empty since $G$ is not cyclic. 
	Let $b\in A$ be such that $w(b)=k$ is the minimum weight in $A$. 
	By the triangle inequality, the elements $ab,ba,a^{-1}b,b^{-1}a^{-1}\in A$ satisfy 
	$$k\leq w(ab),w(ba),w(a^{-1}b),w(b^{-1}a^{-1})\leq k+1.$$ 
	Notice that the four elements $ab, ba, a^{-1}b, b^{-1}a^{-1}$ are also different from $b$ and $b^{-1}$, otherwise we would have that $a=e$ or $b^{\pm 2}=a$, and the last identity is not possible since this would imply that $\langle b \rangle = \langle a \rangle$ as $b$ has odd order. Thus, since the part $\{b,b^{-1}\}$ has weight $k$ we must have that 
	$$w(ab)=w(ba)=w(a^{-1}b)=w(b^{-1}a^{-1})= k+1.$$ 
Note that at most two elements have the same weight and one of them is the inverse of the other. 
	
	Now, if $ab=ba$, first notice that since there are no elements of order $2$ then $ab\not = b^{-1}a^{-1}$ and $ab\not = a^{-1}b$, so the only possibility is that $a^{-1}b=b^{-1}a^{-1}$, but since $a$ and $b$ commute then we obtain $b=b^{-1}$. Otherwise, if $ab\not =ba$ then we must have that $a^{-1}b=ba$, but then $a^{-1}=bab^{-1}$ and then $a$ is conjugate to its inverse, which is well-known that cannot happen in a group of odd order. 
 In both cases we reach a contradiction and hence $G$ must be cyclic.
\end{proof}

\begin{rem} \label{fqs}
Let $\ff_q$ be a finite field of $q$ elements, hence $q=p^n$ with $p$ prime and $n\in \N$. The additive group of $\ff_q$ is isomorphic to $\Z_p^n$. If $p=2$, Proposition \ref{some Lee} implies (taking $G$ to be trivial) that $\ff_{2^n}$ has Lee metrics. On the other hand, if $p$ is odd, Theorem \ref{odd order} says that $\ff_{p^n}$ has Lee metrics if and only if $n=1$ (i.e.\@ it is $\Z_p$). 
\end{rem}

Now, we characterize all the groups having bi-invariant Lee metrics, that is, those groups in which any given (invariant) Lee metric is also bi-invariant.
\begin{prop} \label{charact bi}
Let $G$ be a group with a Lee metric $d_{Lee}$. Then, $d_{Lee}$ is bi-invariant if and only if $G$ is abelian or $G= \Q_8 \times \Z_2^k$ for some $k\in \N_0$.
\end{prop}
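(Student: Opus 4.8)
The plan is to translate the bi-invariance of $d_{Lee}$ into a purely group-theoretic condition on conjugation and then classify the groups satisfying it. Since $d_{Lee}$ is right-invariant, by the discussion of bi-invariant metrics in Section~1 it is bi-invariant if and only if its weight $w$ is constant on conjugacy classes. Now, because $d_{Lee}$ is a Lee metric, its induced partition is exactly $\mathcal{P}_{Lee}(G)$, so the level sets of $w$ are precisely the parts $\{a,a^{-1}\}$; in particular $w$ takes a different value on each part and is injective on $\mathcal{P}_{Lee}$. Combined with $w(x)=w(x^{-1})$, this shows that $w$ is constant on conjugacy classes if and only if $gxg^{-1}\in\{x,x^{-1}\}$ for all $g,x\in G$. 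Thus the statement reduces to: $d_{Lee}$ is bi-invariant if and only if every inner automorphism sends each element to itself or to its inverse.

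For the easy direction I would first dispose of the abelian case, where conjugation is trivial and hence every invariant metric, in particular $d_{Lee}$, is automatically bi-invariant. For $G=\Q_8\times\Z_2^k$ I would recall that $\Q_8$ has a Lee metric (Example \ref{examQ8}) and hence so does $\Q_8\times\Z_2^k$ by Proposition \ref{some Lee}; bi-invariance is then a direct check, since $\Z_2^k$ is central of exponent $2$ and in $\Q_8$ one has $gxg^{-1}\in\{x,x^{-1}\}$ for all $g,x$.

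For the converse, assume the conjugation condition holds and $G$ is non-abelian; the goal is to force $G\cong\Q_8\times\Z_2^k$. Two immediate consequences are: (i) every involution is central, because an element of order $2$ equals its own inverse and so has a singleton conjugacy class; and (ii) every cyclic subgroup $\langle x\rangle$ is normal (as $gxg^{-1}\in\{x,x^{-1}\}\subseteq\langle x\rangle$), hence every subgroup of $G$ is normal, i.e. $G$ is a Dedekind group. Being non-abelian, $G$ is Hamiltonian, and by the Dedekind--Baer classification $G\cong\Q_8\times B\times D$ with $B$ an elementary abelian $2$-group and $D$ an abelian group all of whose elements have odd order, both central factors.

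The main obstacle, and the place where the sharper inversion condition (rather than mere normality of subgroups) is essential, is showing that the odd part $D$ is trivial. Here I would argue by contradiction: if $d\in D$ has odd order $>1$, then taking $i,j\in\Q_8$ of order $4$, the element $x=(i,1,d)$ is conjugated by $(j,1,1)$ to $(-i,1,d)$, which is neither $x$ nor $x^{-1}=(-i,1,d^{-1})$ since $d\ne d^{-1}$; this violates the condition. Hence $D$ is trivial, and since $G$ admits a Lee metric it is finite, so $B\cong\Z_2^k$ for some $k\in\N_0$, giving $G\cong\Q_8\times\Z_2^k$. As a more self-contained alternative to invoking Dedekind--Baer, one can instead show directly from (i)--(ii) that every non-central element has order $4$ and squares to a single central involution $z$, that $Z(G)$ has exponent $2$, and that $G/Z(G)\cong\Z_2^2$ (the commutator pairing being a non-degenerate alternating form carrying a quadratic form constant on nonzero vectors, which forces dimension $2$); reconstructing $G$ as the central product $\Q_8\ast Z(G)$ then yields $\Q_8\times\Z_2^k$ directly.
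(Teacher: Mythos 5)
Your proof is correct, but it takes a genuinely different route from the paper's. The paper disposes of the proposition in three lines by citing Theorem 6.8 of \cite{PV2} (every invariant metric on $G$ is bi-invariant iff $G$ is abelian or $\Q_8\times\Z_2^k$): the forward direction is immediate, and for the converse the paper observes that since $\mathcal{P}_{Lee}$ is the \emph{finest} unitary symmetric partition, its being conjugation-invariant forces every coarser partition, hence every invariant metric, to be conjugation-invariant, which puts $G$ in the scope of that external theorem. You instead make the argument self-contained: you extract the exact group-theoretic content of bi-invariance of $d_{Lee}$ (namely $gxg^{-1}\in\{x,x^{-1}\}$ for all $g,x$, using that $w$ separates the parts $\{a,a^{-1}\}$), deduce that $G$ is Dedekind, invoke the Dedekind--Baer classification $\Q_8\times B\times D$, and then use the sharper inversion condition --- not mere normality of subgroups --- to kill the odd factor $D$ via the explicit conjugation $(j,1,1)(i,1,d)(j,1,1)^{-1}=(i^{-1},1,d)\neq(i,1,d)^{\pm1}$. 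That last step is exactly where the distinction between ``Hamiltonian'' and ``every conjugate of $x$ lies in $\{x,x^{-1}\}$'' is needed, and you handle it correctly. What the paper's route buys is brevity and the structurally interesting byproduct that bi-invariance of the single metric $d_{Lee}$ propagates to all invariant metrics; what yours buys is independence from \cite{PV2} and an argument that works verbatim for infinite Dedekind groups up to the final finiteness step. On that step: your assertion that a group with a Lee metric is finite is justified by Definition \ref{interval} as literally stated (finitely many weights, each part of $\mathcal{P}_{Lee}$ of size at most $2$), though the paper elsewhere admits $\Z$ as having a Lee metric, so if one adopts that looser reading you would need to argue separately that $B$ cannot be an infinite elementary abelian $2$-group; this ambiguity is the paper's, not yours.
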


\begin{proof}
	By Theorem 6.8 in \cite{PV2}, every invariant metric on $G$ is also bi-invariant if and only if $G$ is abelian or 
	$G= \Q_8 \times \Z_2^k$. Thus, if $G$ is abelian or $G= \Q_8 \times \Z_2^k$ for some $k\in \N$, then every metric on $G$ is bi-invariant, in particular the Lee metric. Now, suppose that $d_{Lee}$ is bi-invariant. Then, the associated Lee partition is conjugate. Therefore, the partition associated to any other metric $d$ is also conjugate, that is to say $d$ is bi-invariant.   
\end{proof}

Notice that the groups $G=\Q_8 \times \Z_2^{k}$ have Lee metrics for any $k\in \N$, by Example \ref{examQ8} and 
Proposition~\ref{some Lee}.

\subsubsection*{Families of groups admitting Lee metrics}
Here we consider some families of metacyclic groups (groups $G$ having a normal cyclic group $H$ such that $G/H$ is also cyclic) which admit Lee metrics. Metacyclic groups include for instance cyclic groups, dicyclic groups and direct products or semidirect products of cyclic groups (in particular dihedral and quasidihedral groups). We already know that cyclic groups admit Lee metrics.

We will now show that some families of non-abelian groups do have Lee metrics while other families do not.
We will make use of a specific metric on groups that we now recall. Let $S$ be a set of generators of the group $G$ and let 
$$v:S \rightarrow \N$$ 
be any function. The \textit{weighted word metric} on $G$ associated to $S$ and $v$ is defined as 
$$d_{S,v}(x,y) = w_{S,v}(xy^{-1})$$ 
for every $x,y \in G$ where $w_{S,v}(e)=0$ and 
\begin{equation} \label{min weight}
w_{S,v}(x) = \min \Big \{ \sum\nolimits_{i=1}^k v(g_i) : x= g_1^{\varepsilon_1} \cdots g_k^{\varepsilon_k}, g_i\in S, \varepsilon_i \in \{\pm 1\}, k\in \N \Big\}.
\end{equation}
If $x= g_1^{\varepsilon_1} \cdots g_k^{\varepsilon_k}$ we will call $g_1^{\varepsilon_1} \cdots g_k^{\varepsilon_k}$ a word expression for $x$ and $v(g_1) +\cdots +v(g_k)$ the weight of the word.

In general it is difficult to decide whether this metric is interval or not.  
Our strategy will be to define interval weights $w$ on the group $G$ and check that they coincide with the weight associated to some word metric $d_{S,v}$ on $G$, and hence $w$ defines an interval metric.

First, we show that we can always define a Lee metric on dihedral groups.
\begin{thm} \label{Dn Lee}
	The dihedral group $\D_n$ has a Lee metric for every $n\ge 1$. 
\end{thm}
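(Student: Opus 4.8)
The plan is to produce an explicit interval weight $w$ on $\D_n=\langle \rho,\tau : \rho^n=\tau^2=e,\ \tau\rho\tau=\rho^{-1}\rangle$ whose level sets are exactly the parts of $\mathcal{P}_{Lee}(\D_n)$, and to obtain the triangle inequality for free by realizing $w$ as a weighted word metric $w_{S,v}$ for a suitable generating set $S$ and $v:S\to\N$ (as in \eqref{min weight}). First I would record the Lee partition: it consists of $\{e\}$, the rotation pairs $\{\rho^i,\rho^{-i}\}$ for $1\le i\le \lfloor n/2\rfloor$ (with $\{\rho^{n/2}\}$ a singleton when $n$ is even), and the $n$ reflection singletons $\{\rho^j\tau\}$, since every reflection is an involution. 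Hence there are $s=k(\D_n)=\lfloor n/2\rfloor+n$ non-trivial parts, so an interval metric realizing this partition must assign each value of $\{0,1,\dots,s\}$ to exactly one part. The cases $n=1,2$ are abelian and already covered by Proposition~\ref{2-weights}, so I would assume $n\ge 3$.

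I would then organize the verification according to the four product types, which reduces the triangle inequality to transparent conditions on the rotation weights $a_i:=w(\rho^i)$ and the reflection weights $r_j:=w(\rho^j\tau)$. Using $\rho^i\rho^{j}=\rho^{i+j}$, $\rho^i(\rho^j\tau)=\rho^{i+j}\tau$, $(\rho^j\tau)\rho^i=\rho^{j-i}\tau$ and $(\rho^i\tau)(\rho^j\tau)=\rho^{i-j}$, the requirements become: (i) the $a_i$ form a subadditive symmetric weight on $\Z_n$; (ii) $|r_k-r_j|\le a_{k-j}$ for all $j,k$, i.e. the reflection weights vary slowly, and cyclically, as the index moves; and (iii) $a_k\le r_i+r_{i-k}$ for all $i$. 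The crucial tension is between (ii) and the requirement that the $n$ reflection weights be pairwise distinct: a standard Lee weight on the rotations forces the reflections into a range of size $\le\lfloor n/2\rfloor$, too small for $n$ distinct values.

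To break this obstruction I would \emph{not} use the symmetric generating set $\{\rho,\tau\}$: any word metric for which $S$ and $v$ are invariant under conjugation by $\tau$ assigns equal weight to the conjugate reflections $\rho^j\tau$ and $\rho^{-j}\tau$, whereas the Lee partition needs them distinct (consistently with Proposition~\ref{charact bi}, $d_{Lee}$ cannot be bi-invariant here). Instead I would take $S$ to be the set of reflections themselves, with a carefully chosen strictly increasing weight sequence $v(\rho^j\tau)=c_j$, so that each reflection keeps weight $c_j$ while each rotation $\rho^k$ acquires the value $\min_{a-b\equiv k}\{c_a+c_b\}$, these minima filling precisely the gaps left by the $c_j$. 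For $n=3$ one checks that $(c_0,c_1,c_2)=(1,2,4)$ yields reflection weights $1,2,4$ and rotation-pair weight $3$, i.e. the Lee metric exactly; the general construction is a ``zigzag'' sequence realizing all of $\{1,\dots,s\}$ with cyclic increments bounded by the smallest rotation weight, treated slightly differently for $n$ odd and $n$ even because of the central involution $\rho^{n/2}$. Finally I would verify gaplessness and injectivity on parts, concluding that the induced partition is $\mathcal{P}_{Lee}(\D_n)$.

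The hard part will be twofold: choosing the weight sequence $c_j$ so that the induced values are simultaneously \emph{injective} on the parts and \emph{gapless}, and proving that no shorter factorization lowers any $c_j$ (so that the chosen generator weights are actually attained, rather than undercut by products of three or more reflections). Handling the even case, where the reflections split into two conjugacy classes and $\rho^{n/2}$ is a central involution carrying its own singleton part, will require separate bookkeeping, and I expect these explicit computations — not the conceptual reduction above — to be the main technical burden of the proof.
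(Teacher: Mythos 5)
Your reduction is sound and you have correctly identified the decisive obstruction — by Proposition \ref{charact bi} a Lee metric on $\D_n$ ($n\ge 3$) cannot be bi-invariant, so the conjugate reflections $\rho^j\tau$ and $\rho^{-j}\tau$ must receive different weights and any symmetric choice of generating data fails. Your count of the Lee partition ($\lfloor n/2\rfloor + n$ non-trivial parts, matching $k(\D_n)$) and your $n=3$ example are also correct. But the proof is not complete: for general $n$ the weight sequence $c_j$ is only described as a ``zigzag'' you hope exists, and the two verifications you yourself flag as the main burden — that the induced values are gapless and injective on parts, and that no product of several generators undercuts an assigned generator weight $c_j$ — are exactly the content of the theorem and are nowhere carried out. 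With $S$ equal to all $n$ reflections, controlling the minimum in \eqref{min weight} over arbitrary products is genuinely delicate, and as stated the argument establishes the result only for $n=3$.

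The paper closes this gap by making the generating set as small as possible rather than as large as possible: it presents $\D_n=\langle s,t : s^2=t^2=e,\ (st)^n=e\rangle$ with the two reflections $t=\tau$, $s=\rho\tau$, and takes $v(t)=1$, $v(s)=2$. Since every element of $\D_n$ is an alternating word in $s$ and $t$, the minimal word weight is computed at a glance: rotations $(st)^k,(ts)^k$ get weight $3k$ and reflections $(ts)^kt$, $(st)^ks$ get weights $3k+1$, $3k+2$, so the values $0,1,\ldots,3m$ (resp.\ $0,\ldots,3m+1$) are all attained with no case analysis, and the partition is identified with $\mathcal{P}_{Lee}$ by counting parts against $k(\D_n)$. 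This is precisely the asymmetric zigzag you were aiming for, but realized so that both of your ``hard parts'' evaporate; if you want to salvage your write-up, specialize your generating set to these two reflections and the remaining computation is a few lines.
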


\begin{proof}
	Since $\D_1=\Z_2$ and $\D_2=\Z_2^2$ they have Lee metrics ($\Z_2$ is cyclic and for $\Z_2^2$ we use Proposition \ref{some Lee}). As these groups are abelian their Lee metrics are bi-invariant. So, we consider $n\ge 3$. 
	
	Instead of the classical presentation 
	$\D_n = \langle \rho, \tau \,:\, \rho^n = \tau^2 = (\tau\rho)^2=e\rangle$, 
	we will use another common presentation of $\D_n$ generated by $2$ reflections, 
	$$\D_n = \langle s,t \,:\, s^2=t^2 = e, (st)^n = e\rangle.$$ 
One can pass from one presentation to the other via the identifications $t=\tau$ and $s=\rho\tau$. 
	
Consider the weighted word metric $d_{S,v}$ on $\D_n$ associated to the generator set $S=\{s,t\}$, with $v(t)=1$ and $v(s)=2$ and let 
$w=w_{S,v}$ 
be the weight associated to $d_{S,v}$.
Now, notice that the elements of the group can be written as $e,t,s,ts,st,tst,sts,stst,tsts,\ldots$, 
that is, all the words of length up to $\lfloor n\rfloor$ formed by alternate concatenation of $s$ and $t$; namely 
$$\D_n = \big\{ e, (st)^is, (ts)^it, (st)^j, (ts)^j : 
0 \le i \le \lfloor \tfrac{n-1}2 \rfloor, 0 \le  j \le \lfloor \tfrac{n}2 \rfloor \big\}.$$ 
The words of odd length are the reflections, since when squaring every such word the $s$'s and $t$'s all cancel out. Namely, 
$$((st)^is)^2=(st\cdots sts)(st\cdots sts)=e,$$ 
and similarly for $(ts)^it$. The words of even length are the rotations, since we have 
$st=\rho \tau^2=\rho$ and $ts=\tau \rho \tau=\rho^{-1}$ and hence $(st)^k=\rho^k$ and $(ts)^k=\rho^{-k}$ for every $k$. Also, there is an additional identification given by the relation $(st)^{n}=(ts)^{n}=e$. This gives the extra involution $(st)^{\frac n2}= (ts)^{\frac n2}=\rho^{\frac n2}$ for $n$ even and $(st)^{\frac{n-1}2}s= (ts)^{\frac{n-1}2}t$ for $n$ odd.
	
	Now, for the rotations we have 
	$$w((st)^k)=w((ts)^k)=3k$$ 
	for $0\leq  k\leq\lfloor \frac{n}{2}\rfloor$, while for the reflections we have 
	$$w((ts)^kt)=3k+1 \qquad \text{and} \qquad w((st)^ks)=3k+2,$$ 
	for $0\leq  k\leq\lfloor \frac{n-1}{2}\rfloor$. 
	That is, $w$ takes all the values from $0$ to $3m$ if $n=2m$ and all the values from 0 to $3m+1$ if $n=2m+1$. Hence, $d_{S,v}$ is interval. Furthermore, the unitary symmetric partition $\mathcal{P}_w$ induced by $w$ has the same size $s$ as the Lee partition $\mathcal{P}_{Lee}$ of $\D_n$, namely $s=3m$ if $n=2m$ (resp.\@ $s=3m+1$ if $n=2m+1$). So, these partitions must coincide, that is $\mathcal{P}_{w} = \mathcal{P}_{Lee}$. 
Therefore, $d_{S,v}$ is a Lee metric on $\D_n$.
\end{proof}

\begin{exam}[Lee metric on $\D_4$]
From the proof of the previous proposition, we can give an explicit Lee metric on $\D_4=\{e,s,t,ts,st,tst,sts,tsts=stst\}$ with $s^2=t^2=(ts)^4=e$.
This metric is given by the following weights:
$$\begin{tabular}{|c|c|c|c|c|c|c|c|}
\hline
 $e$ & $t$ & $s$ & $ts$ & $st$ &  $tst$ &  $sts$ & $tsts$   \\ \hline
  $0$ & $1$ & $2$ & $3$ & $3$ & $4$ & $5$ & $6$  \\   
\hline 
\end{tabular} \qquad or \qquad 
\begin{tabular}{|c|c|c|c|c|c|c|c|}
\hline
$e$ & $\tau$ & $\rho\tau$ & $\rho^3$ & $\rho$ &  $\rho^3\tau$ &  $\rho^2\tau$ & $\rho^2$   \\ \hline
$0$ & $1$ & $2$ & $3$ & $3$ & $4$ & $5$ & $6$  \\   
\hline 
\end{tabular}$$

if one uses the presentation $\D_4=\{e, \rho, \rho^2, \rho^3, \tau, \rho\tau,  \rho^2\tau, \rho^3\tau^2\}$ with
$\rho^4=\tau^2=(\tau\rho)^2=e$. Recall from the above proof that these presentations are related by
$t=\tau$ and $s=\rho\tau$. 
\hfill $\lozenge$
\end{exam}

We now consider the family of dicyclic groups $\Q_{4n}$ (sometimes denoted $Dic_{n}$), $n\ge 1$, which includes the generalized quaternion groups $\Q_{8n}$, for any $n\ge 1$ ($\Q_8$ is just the quaternion group).

\begin{exam} 
Here we focus on the second generalized quaternion group ($\Q_8$, was previously considered) given by
	$\Q_{16} = \langle x,y \,:\, x^{8} = e, y^2 = x^4, y^{-1}xy = x^{-1}\rangle$. Notice that we can write
$$\Q_{16} = \{e,x,x^2,x^3,x^4,x^5,x^6,x^7,y,xy,x^2y,x^3y,x^4y,x^5y,x^6y,x^7y\}.$$
We consider the auxiliary function 
	$$w_1(k)=min\{2w_{Lee,8}(k), 2w_{Lee,8}(4-k)+1\},$$
	where $w_{Lee,8}$ denotes the classic Lee weight on the cyclic subgroup $\Z_8 \simeq \langle x \rangle$, and we define the following function on $\Q_{16}$:
	\begin{equation} 
	w(x^ky^j) = \begin{cases}
	w_1(k)  & \qquad \text{if } j=0, \\[1mm]
	4+ w_1(k \bmod 4) & \qquad \text{if $j=1$, } k\neq 0,4,  \\[1mm]
	5 & \qquad   \text{otherwise}. 
	\end{cases}
	\end{equation}	
It is easy to check that $w$ is an interval weight function on $\Q_{16}$. Their weights are given in the following table:	
\begin{table}[H]
$$\begin{tabular}{|c||c|c|c|c|c|c|c|c|c|c|c|c|c|c|c|c|}
\hline
 $g$ &  $e$ & $x$ & $x^2$ & $x^3$ & $x^4$ &  $x^5$ &  $x^6$ & $x^7$ & $y$ & $xy$ & $x^2y$ & $x^3y$ & $x^4y$ & $x^5y$ & $x^6y$ & $x^7y$ \\ \hline
 $w(g)$ &  $0$ & $2$ & $4$ & $3$ & $1$ & $3$ & $4$ & $2$ & $5$ & $6$ & $8$ & $7$ & $5$ & $6$ & $8$ & $7$ \\   
	\hline 
	\end{tabular}$$
\end{table}		
The associated unitary symmetric partition of this weight is given by 
$$\mathcal{P} = \big\{ \{e\}, \{x^4\}, \{x, x^7\}, \{x^3, x^5\}, \{x^2, x^6\}, \{y, x^4y\}, \{xy, x^5y\}, \{x^3y, x^7y\}, 
\{x^2y, x^6y\} \big\}.$$ 
Notice that since each part in $\mathcal{P}$ is of the form $\{g,g^{-1}\}$, the above partition is just the Lee partition. This implies that 
the distance $d$ associated to $w$ is a Lee metric on $\Q_{16}$.
\hfill $\lozenge$ 
\end{exam}

We now show that this holds for every dicyclic group. We recall that the dicyclic group $\Q_{4n}$ of order $4n$, with $n\ge 2$, has the following presentation by generators and relations  
\begin{equation} \label{quat4n}
\Q_{4n} = \langle x,y \,:\, x^{2n} = e, y^2 = x^n, y^{-1}xy = x^{-1} \rangle.
\end{equation}

This family includes the quaternion group $\Q_8$ and the groups $\Q_{8n}$ with $n\ge 2$ which are called generalized quaternions. 

\begin{thm} \label{Qn Lee}
	The dicyclic group $\Q_{4n}$ has Lee metrics for every $n\ge 2$. 
\end{thm}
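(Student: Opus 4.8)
The plan is to write down an explicit interval weight on $\Q_{4n}$ and check the triangle inequality by hand, rather than trying to realize the whole metric as a single word metric. First I would fix notation for the Lee partition. Using \eqref{quat4n} one has $yx^k=x^{-k}y$, so every element is $x^k$ or $x^k y$ with $0\le k\le 2n-1$; each $x^ky$ has order $4$ with $(x^ky)^2=x^n$ and $(x^ky)^{-1}=x^{k+n}y$, and $x^n$ is the unique involution. Hence
$$\mathcal{P}_{Lee}=\big\{\{e\},\ \{x^n\},\ \{x^k,x^{-k}\}\ (1\le k\le n-1),\ \{x^ky,x^{k+n}y\}\ (0\le k\le n-1)\big\},$$
which has $2n+1$ parts, matching $k(\Q_{4n})=2n$. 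It therefore suffices to produce an interval weight taking each value in $\{0,1,\dots,2n\}$ exactly once per part, since such a weight realizes the finest partition and is by definition a Lee metric.

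On the cyclic subgroup $\langle x\rangle\simeq\Z_{2n}$ I would take the weighted word metric \eqref{min weight} for the generating set $\{x,x^n\}$ with $v(x)=2$ and $v(x^n)=1$, so that the central involution $x^n$ acts as a shortcut:
$$w_1(k)=\min\{\,2\,w_{Lee,2n}(k),\ 1+2\,w_{Lee,2n}(k+n)\,\}.$$
The elementary fact to verify is that $w_{Lee,2n}(k)+w_{Lee,2n}(k+n)=n$, so that $w_1(k)$ depends only on $t=w_{Lee,2n}(k)\in\{0,\dots,n\}$ through $\min\{2t,\,2n+1-2t\}$, and that the map $t\mapsto\min\{2t,\,2n+1-2t\}$ is a bijection of $\{0,1,\dots,n\}$ onto itself (small $t$ gives the even values, large $t$ the odd ones). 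Being a word metric, $w_1$ satisfies the triangle inequality inside $\langle x\rangle$ automatically, and it assigns each value $0,1,\dots,n$ to exactly one rotation part.

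For the coset I would use the remaining values by setting $w(x^ky)=n+\psi(k\bmod n)$, where $\psi\colon\Z_n\to\{1,\dots,n\}$ is the bijection obtained by listing $\Z_n$ in order of increasing Lee weight: $\psi(0)=1$ and $\psi(j)=2j$, $\psi(n-j)=2j+1$ for $1\le j\le\lfloor n/2\rfloor$. The purpose of this ordering is that consecutive residues receive $\psi$-values differing by at most $2$, whence $\psi$ is $2$-Lipschitz for the Lee metric on $\Z_n$, namely $|\psi(\bar k)-\psi(\bar k')|\le 2\,w_{Lee,n}(\bar k-\bar k')$. Since $w(x^ky)$ depends only on $k\bmod n$ and $(x^ky)^{-1}=x^{k+n}y$, the resulting $w$ is symmetric, positive, and interval with exactly the parts listed above.

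The main step, and the place I expect the real work, is the triangle inequality $w(gh)\le w(g)+w(h)$, which I would split according to the four product types arising from $yx^k=x^{-k}y$. Products $x^ax^b$ are handled by the word metric on $\langle x\rangle$; products $(x^ay)(x^by)=x^{a-b+n}$ are automatic because the left side has weight $\le n$ while the right side has weight $\ge 2n+2$. The binding cases are the mixed products $x^a(x^by)=x^{a+b}y$ and $(x^ay)x^b=x^{a-b}y$, where after cancelling $n$ from both sides the inequality becomes $|\psi(\overline{a+b})-\psi(\bar b)|\le w_1(a)$ and its mirror. The hard part is to bound this using the $2$-Lipschitz property: one shows that for each residue $\bar a$ the minimum of $w_1$ over its class equals $2\,w_{Lee,n}(\bar a)$, so that $|\psi(\overline{a+b})-\psi(\bar b)|\le 2\,w_{Lee,n}(\bar a)\le w_1(a)$. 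With this in place $w$ is a genuine weight, and because it takes all of $0,\dots,2n$ with distinct values on distinct Lee pairs, its induced partition is the finest one; thus $d(g,h)=w(gh^{-1})$ is a Lee metric. The only genuinely delicate bookkeeping is the bijectivity of the two folding maps $t\mapsto\min\{2t,2n+1-2t\}$ and $\psi$, together with the reduction $\min_{a\equiv\bar a}w_1(a)=2\,w_{Lee,n}(\bar a)$.
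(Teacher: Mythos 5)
Your weight function is in fact identical to the one in the paper's proof: the paper's $w_1(k)=\min\{2w_{Lee,2n}(k),\,2w_{Lee,2n}(n-k)+1\}$ agrees with yours because $w_{Lee,2n}(n-k)=w_{Lee,2n}(k+n)$, and the paper's coset assignment $w(x^ky)=n+w_1(k\bmod n)$ for $k\ne 0,n$, together with the value $n+1$ at $y$ and $x^ny$, is exactly your $n+\psi(k\bmod n)$. Where you genuinely diverge is in the verification of the triangle inequality: the paper exhibits $w$ as the weighted word metric $w_{S,v}$ for the seven-element generating set $S=\{x,x^{-1},x^n,y,x^ny,xy,x^{n+1}y\}$ and then computes minimal word weights case by case, whereas you check $w(gh)\le w(g)+w(h)$ directly by product type, reducing the only nontrivial (mixed) case to the $2$-Lipschitz property of $\psi$ on $(\Z_n,w_{Lee,n})$ together with the identity $\min_{a\equiv\bar a\ (n)}w_1(a)=2\,w_{Lee,n}(\bar a)$. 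Both of these key claims are true (consecutive $\psi$-values differ by at most $2$, including at $0$ and at the middle seam; and $\min\{w_{Lee,2n}(a),w_{Lee,2n}(a+n)\}=w_{Lee,n}(\bar a)$ gives the stated minimum since the ``$+1$'' branches are dominated), so your route works and arguably isolates the combinatorial content more cleanly; the trade-off is that the word-metric formulation hands the paper symmetry and the triangle inequality for free, at the cost of its own case analysis of minimal words. One small repair is needed in your bookkeeping: for $n$ even your two formulas for $\psi$ both apply at $j=n/2$ and disagree ($2j=n$ versus $2j+1=n+1$, the latter outside $\{1,\dots,n\}$); the second rule should run only over $1\le j\le\lceil n/2\rceil-1$, which restores bijectivity and makes $\psi$ coincide with $\max\{w_1(\cdot\bmod n),1\}$ as in the paper.
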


\begin{proof}
Notice that every element in $\Q_{4n}$ as given in \eqref{quat4n} can be uniquely written as $x^ky^j$ with $0\leq k<2n$ and 
$0\leq j\leq 1$. Consider the auxiliary function  
	$$w_1(k) = min\{ 2w_{Lee,2n}(k), 2w_{Lee,2n}(n-k) +1 \},$$
	where $w_{Lee,2n}$ denotes the classic Lee weight on $\Z_{2n} \simeq \langle x \rangle$.
Note that $w_1$ takes all the integer values between 0 and $n$.
  
Now, we define the following function on $\Q_{4n}$:
\begin{equation}  \label{wleeQ4n}
	w(x^ky^j) = 
	\begin{cases}
	 w_1(k)  & \qquad \text{if } j=0, \\[.75mm]
	 n+ w_1(k \bmod n) & \qquad \text{if $j=1$, } k\neq 0,n,  \\[.75mm]
	 n+1 & \qquad   \text{otherwise}. 
	\end{cases}
\end{equation}
It is clear that $w$ takes all integer values from $0$ to $2n$.
We now show that this function $w$ coincides with the weighted word metric $w_{S,v}$ defined by taking 
the generator set 
	$$S=\{x,x^{-1},x^n,y,x^ny,xy,x^{n+1}y\}$$ 
and the function $v:S \rightarrow \N$ given by  
\begin{equation}
	\begin{aligned} \label{v's}
		& v(x^n)=1, \qquad & &v(x)=v(x^{-1})=2, \\ & v(y)=v(x^ny)=n+1, \qquad & &v(xy)=v(x^{n+1}y)=n+2.
	\end{aligned}
\end{equation}
We will do this in three steps by studying separately the cases $(a)$ $w(x^k)$ for $0\le k\le n$, $(b)$ $w(x^ky)$ for $k\ne 0,n$ and $(c)$ $w(y)$ or $w(x^ny)$. 

($a$) We first consider the case $j=0$ in \eqref{wleeQ4n}. Since $x^k = x\cdots x$ with the product repeated $k$-times, 
we have that $w_{S,v}(x^k) \le 2k$. Also, $x^k = x^n x^{-1} \cdots x^{-1}$ with $x^{-1}$ repeated $n-k$ times and thus
$w_{S,v}(x^k) \le 1+2(n-k)$.
One can check that for any other way of writing $x^k$ as a product of generators in $S$ we have $w_{S,v}(x^k) \ge 2k$ or $w_{S,v}(x^k) \ge  1+2(n-k)$. Hence, 
$$w(x^k)=w_1(k)=w_{S,v}(x^k).$$ 
	
($b$) Suppose now that we have an element of the form $x^k y$ with $k \ne 0,n$. In this case the element is in $\langle x\rangle y$ and hence any possible expression of $x^ky$ as a product of elements in $S$ must have at least one element in $\langle x\rangle y$. Moreover, since we are looking for the `minimum weight' expression (see \eqref{min weight}), we only need to consider words with  exactly one element in $\langle x\rangle y$, since otherwise the expression would have weight $\geq 2n$ (see \eqref{v's}).

Considering the case where the word (representing $x^ky$) has the element $y$, the expressions $(x^{k-j})(y)(x^{-j})$, have word weight 
\begin{equation}\label{case1}
w_1(k-j)+ (n+1)+w_1(-j) \geq w_1(k)+ (n+1),
\end{equation} where the last inequality is achieved for example for $j=0$. Here we have used that $w(-j)=w(j)$.

For the case that $x^{n}y$ is in the word (representing $x^ky$), the expressions $(x^{k-n-j})(x^ny)(x^{-j})$ have word weight 
\begin{equation}\label{case2}
w_1(k-n-j)+ (n+1)+w_1(-j) \geq w_1(k-n)+ (n+1),
\end{equation} where the last inequality is achieved for example for $j=0$.

If $xy$ is in the word (representing $x^ky$), the expressions $(x^{k-1-j})(xy)(x^{-j})$ have word weight 
$$w_1(k-1-j)+ (n+2)+w_1(-j) \geq w_1(k-1)+ (n+2).$$ 
Now, for $0 < k$ mod $n \le \lfloor\frac{n}{2}\rfloor$, we have that $w_1(k)=w_1(k-1)+2$. And for $\lfloor\frac{n}{2}\rfloor < k$ mod $n <n$, we have that $w_1(k-1) \le w_1(k)$, hence we obtain:
\begin{equation}\label{case3}
	w_1(k-1)+ (n+2) = 
	\begin{cases}
	 w_1(k) + n   & \quad \text{if } 0 < k \bmod n \le \lfloor\frac{n}{2}\rfloor, \\[1mm]
	  w_1(k) + n + 2  & \quad \text{if } \lfloor\frac{n}{2}\rfloor < k\bmod n <n. 
	\end{cases}
\end{equation}

If $x^{n+1}y$ is in the word (representing $x^ky$), the expressions $(x^{k-n-1-j})(x^{n+1}y)(x^{-j})$ have word weight 
$$w_1(k-n-1-j)+ (n+2)+w_1(-j) \geq w_1(k-n-1)+ (n+2).$$
For $k$ such that $0 < k$ mod $n \le \lfloor\frac{n}{2}\rfloor$ we have that $w_1(k-n-1)+ (n+2)= w_1(k-n) +n$. And for $\lfloor\frac{n}{2}\rfloor < k$ mod $n <n$, $w_1(k-1) \le w_1(k)$.
\begin{equation}\label{case4}
	w_1(k-n-1)+ (n+2) = 
	\begin{cases}
	 w_1(k-n) + n   & \quad \text{if } 0 < k \bmod n \le \lfloor\frac{n}{2}\rfloor, \\[1mm]
	  w_1(k-n) + n + 2  & \quad \text{if } \lfloor\frac{n}{2}\rfloor < k\bmod n <n.
	\end{cases}
\end{equation}

Now, comparing \eqref{case1}--\eqref{case4} for all the cases, with $k\neq 0,n$, and using that $w_1(k-n)=w_1(k)-1$ if $\lfloor\frac{n}{2} \rfloor \le k\le n + \lfloor\frac{n}{2} \rfloor$ and $w_1(k-n)=w_1(k)+1$ otherwise, we get that 
\begin{equation}
	w_{S,v}(x^ky) = 
	\begin{cases}
	 w_1(k) + n   & \qquad \text{if } 0< k \le \lfloor\frac{n}{2}\rfloor, \\[1mm]
	 w_1(k) + n   & \qquad \text{if } \lfloor\frac{n}{2}\rfloor < k < n, \\[1mm]
	 w_1(k-n) + n & \qquad \text{if } n < k \le n+\lfloor\frac{n}{2}\rfloor, \\[1mm]
	 w_1(k-n) + n & \qquad \text{if } n+\lfloor\frac{n}{2}\rfloor < k < 2n. \\
	\end{cases}
\end{equation}

Thus, we obtain that the weight of $x^{k}$ is $w_1(k)+n$ for $0 \le n$ and $w_1(k-n)+n$ for $n \le 2n$, that is $$w_{S,v}(x^{k}y)=w(x^ky)=n+w_1(k \bmod n).$$

($c$) Finally, we consider the case $j=1$ and $k=0,n$. 
Since the elements $x^ny$ and $y$ are in the coset $\langle x\rangle y$, then any possible expression as product of elements of $S$ must have at least one element in $\langle x\rangle y$. Hence $n+1$ is the minimum possible weight, since the other elements in $S\cap \langle x\rangle y$ have weight $n+2$. 

Thus, we have proved that $w(g)=w_{S,v}(g)$ for every $g \in G$, and hence $w$ is a weight function. 
	
\smallskip 
On the other hand, since $w$ is a weight function and takes all the values from $0$ to $2n$, the associated metric is interval.
Since $\Q_{4n}$ has $4n$ elements, one element of order 2 and the identity, the Lee partition of $\Q_{4n}$ is of the form $$\mathcal{P}_{Lee}= \{ \{e\}, \{x^n\}\} \cup \{ \{g,g^{-1}\} : g \in G, g\ne x^n \}$$ 
and has $2n$ non-trivial parts. 
By the relations one can check that the inverse of $a^k$ is $a^{n-k}$ and the inverse of $a^ky$ is $a^{k+n}y$, so the partition induced by $w$ is the Lee partition. 
\end{proof}

By Lemma \ref{some Lee} and Theorems \ref{Dn Lee} and \ref{Qn Lee}, 
the groups $G\times \Z_2^k$ with $G=\D_n$ or  $\Q_{4n}$ 
has Lee metrics for every $k \in \N$ and $n\ge 2$.

\section{Groups without Lee metrics}
We now study groups having no Lee metrics defined on it.
In this section we will use the following notation (as in \eqref{PiPj} for parts).
Given subsets $A,B$ of a group $G$ we write 
$$A*B = AB \cup BA = \{ab, ba: a\in A, b\in B \}.$$
Obviously if $G$ is abelian then $A*B=AB$, but we will use it for non-abelian groups.

We know from the previous section that $\Z_3 \times \Z_3$ is the smallest abelian group having no Lee metrics on it. We now consider the non-abelian case.

\begin{prop} \label{A4}
The alternating group $\mathbb{A}_4$ is the smallest non-abelian group having no Lee metrics. 
\end{prop}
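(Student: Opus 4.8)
The plan is to separate the statement into two halves: that every non-abelian group of order $<12$ has a Lee metric, and that $\mathbb{A}_4$ does not.

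For the first half, I would observe that orders $1$--$5$, $7$, $9$, $11$ support only abelian groups, so the non-abelian groups of order $<12$ are exactly $\Sym_3=\D_3$ (order $6$), $\D_4$ and $\Q_8$ (order $8$), and $\D_5$ (order $10$). Each is dihedral or dicyclic and therefore has a Lee metric by Theorems~\ref{Dn Lee} and~\ref{Qn Lee} (with $\Q_8=\Q_{4\cdot 2}$). Thus the real content is that $\mathbb{A}_4$ admits no Lee metric.

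For $\mathbb{A}_4$ I would first record the Lee partition: $\{e\}$, the three singletons of the double transpositions $a=(12)(34)$, $b=(13)(24)$, $c=(14)(23)$, and the four inverse-pairs of $3$-cycles $P^{(i)}$, each indexed by the point $i$ it fixes. Hence $k(\mathbb{A}_4)=7$ and a Lee metric is a bijection assigning the weights $1,\dots,7$ to these seven parts. The argument exploits three families of products via the triangle inequality. First, $e,a,b,c$ form $V\cong\Z_2^2$ (so $ab=c$, etc.), forcing $\max\{w(a),w(b),w(c)\}\le$ the sum of the other two. Second, writing $v_i=w(P^{(i)})$, for $i\ne j$ one has $P^{(i)}\ast P^{(j)}=P^{(k)}\cup P^{(l)}\cup\{x,y\}$, where $\{k,l\}$ is the complementary pair of points and $x,y$ are the two involutions separating $\{i,j\}$; this yields $v_k\le v_i+v_j$ (so the largest $3$-cycle weight is at most the sum of the two smallest) and $w(x)\le v_i+v_j$ for the separating involutions. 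Third, left multiplication by an involution $x$ permutes the $3$-cycles and gives $|v_i-v_j|\le w(x)$ exactly when $x$ separates $\{i,j\}$.

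With $v_1<v_2<v_3<v_4$ the sorted $3$-cycle weights and $\alpha<\beta<\gamma$ the sorted involution weights, the contradiction would run as follows. The sharpest involution bound coming from the second family is $w(x)\le v_1+v_3$; a short check over the involution triples allowed by the $V$-triangle condition shows this forces $v_1+v_3<7$, so the weight $7$ cannot sit on an involution and $v_4=7$. Then $v_4\le v_1+v_2$ gives $v_1+v_2\ge 7$, whence $v_2,v_3,v_4\in\{4,5,6,7\}$ and at least two involution weights lie in $\{1,2,3\}$. The $V$-triangle condition then leaves only the distributions (involutions $\{1,2,3\}$, $3$-cycles $\{4,5,6,7\}$) and (involutions $\{1,3,4\}$, $3$-cycles $\{2,5,6,7\}$). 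In each, the two $3$-cycle parts carrying the extreme weights have spread $3$ (resp.\ $5$), which exceeds the minimum of the weights of its two separating involutions; this contradicts the third family and eliminates every assignment.

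The step I expect to be the main obstacle is the interaction between the bookkeeping of products and the freedom to assign the seven weights to the parts: every inequality above depends on which point-pair and which involution receives which weight, so the contradiction must be made assignment-independent. I would handle this by expressing all constraints through the sorted values $v_1<\dots<v_4$ and $\alpha<\beta<\gamma$, reducing the end to a small symmetric case analysis. A possibly shorter alternative mirrors the proof of Theorem~\ref{odd order}: since in $\mathbb{A}_4$ a $3$-cycle is never conjugate to its inverse, one can attempt the minimal-weight-outside-$\langle a\rangle$ argument with a weight-one $3$-cycle $a$, the extra care being the separate treatment of the involutions.
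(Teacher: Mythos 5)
Your proposal is correct, and the ``smallest'' half (listing $\D_3,\D_4,\Q_8,\D_5$ as the only non-abelian groups of order $<12$ and invoking Theorems~\ref{Dn Lee} and~\ref{Qn Lee}) coincides with what the paper does. For the core claim that $\mathbb{A}_4$ has no Lee metric, however, you take a genuinely different route. The paper argues locally, in the style of Theorem~\ref{odd order}: it splits on whether the weight-$1$ part is an involution singleton $\{a\}$ or a $3$-cycle pair $\{a,a^{-1}\}$, picks the part $\{b,b^{-1}\}$ of minimal weight $k$ among elements not commuting with $a$, and derives a contradiction from the fact that the products $ab,ba,b^{-1}a$ (all forced to have weight $k+1$) cannot fit into a single two-element part, respectively from $\{a,a^{-1}\}*\{b,b^{-1}\}$ covering all remaining $3$-cycles. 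Your argument is instead global: you record the full multiplication structure of the Lee partition (the Klein subgroup $V$, the products $P^{(i)}\ast P^{(j)}=P^{(k)}\cup P^{(l)}\cup\{x,y\}$ with the two separating involutions, and the separation inequality $|v_i-v_j|\le w(x)$), and then exhaust the bijective assignments of $\{1,\dots,7\}$; I checked your case analysis and it closes correctly --- only the distributions (involutions $\{1,2,3\}$, $3$-cycles $\{4,5,6,7\}$) and (involutions $\{1,3,4\}$, $3$-cycles $\{2,5,6,7\}$) survive the coarse bounds, and both are killed by the separation inequality since the extremal $3$-cycle pair is separated by two involutions, at least one of which has weight below the spread. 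The paper's proof is shorter and avoids any enumeration, which is why it scales to the general Theorem~\ref{non abelian cond}; yours is more computational but makes visible exactly which weight distributions nearly work, and the ``shorter alternative'' you sketch at the end (a minimal-weight argument with a weight-one $3$-cycle, treating involutions separately) is essentially the paper's second case.
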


\begin{proof}
Recall that the alternating group in four letters is given by 
$$\mathbb{A}_4 = \{ id, (12)(34), (13)(24), (14)(23), (123), (132), (134), (143), (124), (142), (234), (243)\}.$$
Suppose by contradiction that $\mathbb{A}_4$ has a Lee metric with associated Lee partition $\mathcal{P}=\{P_0,P_1,\ldots,P_s\}$ and thus $w(P_i)=i$ for $0\le i\le n$. Assume first that $P_1$ is of the form $\{a\}$. Then, $w(a)=1$ and $a^2=e$, i.e.\@ $a=(ij)(kl)$. Out of all the parts $P_i$ of the form $\{c,c^{-1}\}$ such that $c$ does not commute with $a$, denote by $\{b,b^{-1}\}$ the one with minimum weight $k$. Thus, $b$ must be one of the $3$-cycles since all the involutions $(ij)(kl)$ commute with each other. Therefore, one gets that $w(ab), w(ba), w(b^{-1}a) \leq k+1$
and, since $ab,ba,b^{-1}a$ are also $3$-cycles, we have that
$$w(ab)=w(ba)=w(b^{-1}a) = k+1.$$
Now, we must have that $\{ab,ba,b^{-1}a\} = \{c,c^{-1}\}$ for some element $c$. Using that $ab\neq ba$, we see that the only possibilities are $b^{-1}a=ba$ or $b^{-1}a=ab$. The first one cannot happen; for if not we would have $b=b^{-1}$, but $b$ is a 3-cycle. The second one implies that $abab=e$, so $ab$ has order $2$ which is absurd since $ab$ does not commute with $a$.

Now, assume that $P_1$ is of the form $\{a,a^{-1}\}$ with $a\ne a^{-1}$, hence $a$ is a 3-cycle.
Let $\{b,b^{-1}\}$ denote the part with minimum weight $k$ among all the parts $\{c,c^{-1}\}$ such that $ac\ne ca$. We can assume that $b$ is a $3$-cycle since otherwise we would be on the previous case. 
Since $a$ and $b$ are different 3-cycles, one can check that 
$$\{a,a^{-1}\}*\{b,b^{-1}\} = \mathbb{A}_4 \smallsetminus (\{a,a^{-1}\} \cup \{b,b^{-1}\}).$$
Thus, 
$$w(\mathbb{A}_4 \smallsetminus (\{a,a^{-1}\} \cup \{b,b^{-1}\}))= w(\{a,a^{-1}\}*\{b,b^{-1}\}) \le k+1.$$
Since $b$ has minimal weight $k$ among the elements not commuting with $a$, this means that all the 3-cycles different from $\{a,a^{-1}\}$ must have weight between $k$ and $k+1$, since they do not commute with each other. This is a contradiction, since there are $3$ parts of the form $\{c,c^{-1}\}$ with $c$ a 3-cycle different from $a$. 

Hence, we have that $\mathbb{A}_4$ does not have Lee metrics.
Since we 
have shown that $\D_n$ with $n=3,4,5,6$ and $\mathbb{Q}_8$ do have Lee metrics (see Theorem \ref{Dn Lee} and Example \ref{examQ8}), then $\mathbb{A}_4$ is the smallest non-abelian group having no Lee metrics. 
\end{proof}

\begin{rem}
	With the results given so far in Sections 2 and 3 (namely Example \ref{z3xz3}, Proposition~\ref{some Lee} and Theorems \ref{odd order}, \ref{Dn Lee}, \ref{Qn Lee}) 
	and the previous Proposition \ref{A4} we know that every group of order less than 16 has a Lee metric with the only exceptions of $\Z_3^2$ and $\mathbb{A}_4$. Also, out of the 14 groups of order 16, we can assure that all the groups have a Lee metric except for four groups: $\Z_4^2$ and the semidirect products $\Z_4 \rtimes \Z_4$, $\Z_2^2 \rtimes \Z_4$ and $(\Z_4 \times \Z_2) \rtimes \Z_4$. For these groups we cannot decide yet whether they have Lee metrics or not. 
\end{rem}

For the next result we will need the following notations. Let $G$ be a group and denote by $G_2$ and $G_{2,4}$ the subgroups of $G$ generated by all the elements of order 2 and of order 2 and 4, respectively. 
Hence, we clearly have 
$$G_2 \subset G_{2,4} \subset G.$$
Also, as usual, we denote by $Z(G)$ the center of $G$; that is, the set of elements of $G$ commuting with all the elements of $G$.

Recall, by the comments after Definition \ref{Lee part}, that an infinite group with uncountable cardinal cannot have Lee metrics. 
It is immediate from Theorem \ref{odd order} that non-cyclic groups of odd order do not have Lee metrics.  If $G$ is finite, we generalize this result to non-cyclic groups with center of odd order. Furthermore, we have the following more general result than Theorem \ref{odd order}.

\begin{thm} \label{non abelian cond}
Let $G$ be a non-cyclic group and suppose that 
$G_{2,4} \neq G$ and $Z(G)$ does not have elements of order $2$. 
Then, $G$ has no Lee metrics. 
\end{thm}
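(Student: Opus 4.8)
The plan is to argue by contradiction, extending the two arguments already used for Theorem~\ref{odd order} and Proposition~\ref{A4}. Assume $G$ carries a Lee metric with weight function $w$; then $w$ attains each value $0,1,\ldots,s$ on exactly one part of $\mathcal{P}_{Lee}$, so $w$ is injective on parts. The subgroup $G_{2,4}$ is characteristic and, by hypothesis, proper, so $G\smallsetminus G_{2,4}\neq\varnothing$; I would fix an element $b$ of minimal weight $k$ in $G\smallsetminus G_{2,4}$. Since every element of order $2$ or $4$ lies in $G_{2,4}$, the order of $b$ is not in $\{2,4\}$; in particular $b\neq b^{-1}$, the pair $\{b,b^{-1}\}$ is the unique part of weight $k$, and every element of weight $<k$ lies in $G_{2,4}$. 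Let $a$ be the unique weight-$1$ element.

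First suppose $\mathrm{ord}(a)\geq 3$, so $a\neq a^{-1}$; we may assume $a\in G_{2,4}$, for otherwise $a$ is itself minimal in $G\smallsetminus G_{2,4}$ and one runs the identical computation with a target of minimal weight outside $\langle a\rangle$ (available since $G$ is non-cyclic). Barring the degenerate coincidence discussed below, the four products $ab,\ ba,\ a^{-1}b,\ b^{-1}a^{-1}$ lie in $G\smallsetminus G_{2,4}$ (normality of $G_{2,4}$ and $a\in G_{2,4}$ keep them off $G_{2,4}$), and by the triangle inequality and injectivity of $w$ they all have weight $k+1$, hence lie in the single part $P_{k+1}=\{c,c^{-1}\}$. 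As $ab$ and $b^{-1}a^{-1}$ are mutually inverse, this forces $ba,a^{-1}b\in\{ab,b^{-1}a^{-1}\}$. If $ab=ba$ one gets $a^2=e$ or $b^2=e$, both impossible. If $ab\neq ba$ then $ba=b^{-1}a^{-1}$ and $a^{-1}b=b^{-1}a^{-1}$; the first gives $b^2=a^{-2}$, while equating the two relations yields $ba=a^{-1}b$, i.e.\ $bab^{-1}=a^{-1}$, which in turn forces $b^2=a^2$. Then $b^2=a^2=a^{-2}$ and so $b^4=e$, contradicting $\mathrm{ord}(b)\notin\{2,4\}$.

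The case $\mathrm{ord}(a)=2$ is where the hypothesis on $Z(G)$ enters: since $Z(G)$ has no involutions, $a\notin Z(G)$, so $a$ fails to commute with some element; because the complement of a proper subgroup generates the whole group, $a$ must in fact fail to commute with some element of $G\smallsetminus G_{2,4}$ (otherwise $a$ would centralize $\langle G\smallsetminus G_{2,4}\rangle=G$). I would take $b$ of minimal weight among the elements of $G\smallsetminus G_{2,4}$ \emph{not} commuting with $a$, so that $ab\neq ba$. Here $a=b^{\pm 2}$ is excluded, since it would make $b^{\pm 2}$ an involution and hence $\mathrm{ord}(b)\in\{2,4\}$; one checks that $ab,\ ba,\ b^{-1}a$ still fail to commute with $a$ and stay in $G\smallsetminus G_{2,4}$, so they all land in $P_{k+1}$. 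Since $a^{-1}=a$ makes $ab$ and $b^{-1}a$ mutually inverse, $ba\in\{ab,b^{-1}a\}$ together with $ab\neq ba$ forces $ba=b^{-1}a$, i.e.\ $b=b^{-1}$, contradicting $\mathrm{ord}(b)\neq 2$.

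The main obstacle is the \emph{degenerate coincidence} in the case $\mathrm{ord}(a)\geq 3$, in which one of the four products equals $b^{\pm 1}$ and so has weight $k$ rather than $k+1$; all such coincidences reduce to $a=b^{\pm 2}$, forcing $a$ and $b$ to commute with $w(b^2)=1<w(b)$. Unlike the odd-order setting of Theorem~\ref{odd order}, this configuration is genuinely consistent inside the cyclic subgroup $\langle b\rangle$ (a non-standard interval weighting can place even powers of $b$ at small weights), so it cannot be excluded by a local argument and must be resolved using the global structure of $G$. The plan is to exploit that $G$ is non-cyclic, hence $\langle b\rangle\neq G$ and $G$ is not the union of the two proper subgroups $\langle b\rangle$ and $G_{2,4}$: choosing a further minimal-weight target outside $\langle b\rangle\cup G_{2,4}$ and comparing, together with the observation that odd powers of $b$ all lie off $G_{2,4}$ while even powers carry the small weights, should force an impossible clash of weight values. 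Making this last step uniform across the admissible orders of $b$ is precisely where both standing hypotheses, on $G_{2,4}$ and on $Z(G)$, are used together.
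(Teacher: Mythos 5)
Your overall engine --- pick a minimal-weight element $b$ outside a proper subgroup, squeeze the products $ab,\ ba,\ a^{-1}b,\ b^{-1}a^{-1}$ into the single part of weight $k+1$, and extract $b^2=e$ or $b^4=e$ --- is the same one the paper runs, and your $\mathrm{ord}(a)=2$ case is essentially sound. But there is a genuine gap, and you have named it yourself: the ``degenerate coincidence'' $a=b^{\pm2}$ in the case $\mathrm{ord}(a)\ge 3$ is never actually resolved. Your closing paragraph offers only a plan (``choosing a further minimal-weight target outside $\langle b\rangle\cup G_{2,4}$ \dots should force an impossible clash''), with no argument, and as you yourself observe the configuration $w(b^2)=1<w(b)$ is locally consistent on $\langle b\rangle$, so something nontrivial is still owed. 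A second hand-wave of the same kind is the sub-case $a\notin G_{2,4}$, dismissed with ``one runs the identical computation with a target of minimal weight outside $\langle a\rangle$'': the ambient set changes, so the computation is not identical, and you never treat $\mathrm{ord}(a)=\infty$ (where the paper disposes of the problem immediately because the powers of $a$ already exhaust all weight values).

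The paper closes exactly the hole you left open by choosing the minimality set differently, so that the non-commutativity needed to kill $a=b^{\pm2}$ is built into the definition of $b$ rather than recovered afterwards. It first splits on whether $a\in Z(G)$. If $a\notin Z(G)$, it takes $b$ of minimal weight among the elements \emph{not commuting with} $a$ (intersected with the complement of $G_{2,4}$ when $\mathrm{ord}(a)$ is even; this set is nonempty because a group is never the union of two proper subgroups). With that choice $a=b^{\pm2}$ is impossible outright, since it would force $a$ and $b$ to commute, so all four products genuinely have weight $k+1$ and the algebra goes through: $b^2=a^2=b^{-2}$ gives $b^4=e$, contradicting $b\notin G_{2,4}$, and for $\mathrm{ord}(a)$ odd $\langle b^2\rangle=\langle a\rangle$ forces commutation. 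If $a\in Z(G)$, the hypothesis that $Z(G)$ has no involutions gives $a\ne a^{-1}$, and centrality collapses the products to $b=b^{-1}$ directly. I would restructure your case analysis around the centralizer $C(a)$ in this way rather than around $G_{2,4}$ alone; as written, the argument is incomplete precisely at the point you flagged.
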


\begin{proof}
We will prove the result by contradiction. 
So, assume that $G$ has a Lee metric $d$ with associated weight $w$. In particular, $d$ is interval, and hence there is some $a\in G$ such that $w(a)=w(a^{-1})=1$. 
If $ord(a)$ is infinite, then the triangle inequality forces that $w(a^{k})=w(a^{-k})=k$ for all $k\in\mathbb{N}$ and, since $G$ is not cyclic, there is an element $b\not\in \left\langle a \right\rangle$. Now we cannot give any weight to $b$ since all the values are already assigned to the elements of $\left\langle a \right\rangle$. 

Now, if $ord(a)$ is finite, we split the proof in two cases, depending whether $a$ is in the center of $G$ or not.

\noindent ($a$) 
Suppose that $a\not\in Z(G)$. We consider the cases in which the order of $a$, $ord(a)$, is even or odd 
separately. 

$(i)$ If $ord(a)$ is odd then we define 
$$N_a = \{ x\in G \,:\, ax \ne xa \} = G \smallsetminus C(a) \ne \varnothing,$$
where $C(a) = \{g\in G : ga=ag \}$ is the centralizer of $a$ in $G$.
Clearly, $a\notin N_a$. Let $b \in N_a$ such that 
$$w(b) = w(b^{-1}) = \min_{g\in N_a} \{w(g)\} = k.$$ 

Now we have 
$$\{a,a^{-1}\} * \{b,b^{-1}\} = \{ab,ab^{-1},a^{-1}b,a^{-1}b^{-1},ba,ba^{-1},b^{-1}a,b^{-1}a^{-1}\}\subset N_a.$$ 
The inclusion holds because $b\not\in C(a)$ and then $\{a,a^{-1}\} * \{b,b^{-1}\}$ is the union of all the non-trivial cosets of $C(a)$. That is, we have
$$\{a,a^{-1}\} * \{b,b^{-1}\} = \bigcup_{g\in G \smallsetminus \{e\}} gC(a).$$ 
All the elements in $\{a,a^{-1}\} * \{b,b^{-1}\}$ must have weight $r$ with $k\leq r \leq k+1$ and are all different from $b,b^{-1}$ (if they were equal, then this would imply that either $a=e$, or $a=b^{2}$, or else $a=b^{-2}$, which forces $a$ and $b$ commute). So all the elements of $\{a,a^{-1}\} * \{b,b^{-1}\}$ have weight $k+1$ and hence $\{a,a^{-1}\} * \{b,b^{-1}\} = \{c,c^{-1}\}$ for some $c$. In this way we get that 
$$\{a,a^{-1}\} * \{b,b^{-1}\}=\{ab,ba\},$$ 
since $ab \ne ba$. In particular we have $ab=b^{-1}a=ba^{-1}$, so $b^2=a^2$, but then $\left\langle b^2\right\rangle=\left\langle a^2\right\rangle=\left\langle a\right\rangle$ (since $a$ has odd order), so $a$ and $b$ must commute, which is absurd.

($ii$) If $ord(a)$ is even, by hypothesis, there is a proper subgroup $G_{2,4}$ that contains all the elements of order $2$ and $4$ in $G$, so we can consider
$$ N_{a,4} = \{ x\in G \,:\, ax \ne xa, x\not\in G_{2,4} \} = G \smallsetminus \{C(a)\cup G_{2,4}\} \ne \varnothing.$$ 
This set is non-empty for if not one has that $G=C(a)\cup G_{2,4}$, but it is a well known fact that a group can never be the union of two proper subgroups. Thus $C(a) \cup G_{2,4}$ is a group if and only if $C(a)\subseteq G_{2,4}$ or $G_{2,4} \subseteq C(a)$, and neither can happen since $G_{2,4} \subsetneq G$ and $a\not\in Z(G)$.  
Now, as in the previous case, we obtain that $a^2=b^2$ and also that $a^2=b^{-2}$, this implies that $b^4=e$, that is $b\in G_{2,4}$ which is absurd since $b\in A$.

\smallskip
\noindent $(b)$ 
Now, assume that $a \in Z(G)$. Consider the set 
$$N_{a,2} = \{ x\in G \,:\, x\not\in\left\langle a\right\rangle, x\not\in G_2\} = G \smallsetminus 
\{\left\langle a\right\rangle \cup G_2\} \ne \varnothing.$$ 
By the same argument as in the previous case, this set is not empty because if $G_2\subseteq\langle a \rangle=G$ would imply that $G$ is cyclic, and $\langle a \rangle\subseteq G_2=G$ cannot happen by hypothesis.
Let $b \in N_a$ such that 
$$w(b)=w(b^{-1})=\min_{g\in N_{a,2}}\{w(g)\}=k.$$ 
Now, we have 
$$\{a,a^{-1}\} * \{b,b^{-1}\} = \{ab,ab^{-1},a^{-1}b,a^{-1}b^{-1},ba,ba^{-1},b^{-1}a,b^{-1}a^{-1}\} \subset N_{a,2}.$$ 
Since $Z(G)$ does not have elements of order $2$, 
then 
$a\ne a^{-1}$. Then $a^{-1}b\ne ab$, and this implies that $a^{-1}b=(ab)^{-1}$, that is $a^{-1}b=b^{-1}a^{-1}$. Now, as $a\in Z(G)$, then $b=b^{-1}$, which is absurd because $b\in N_{a,2}$.
\end{proof}

As a direct consequence of the theorem, we have the following.

\begin{coro} \label{torsion-free}
If $G$ is a torsion-free group, then $G$ has a Lee metric if and only if $G$ is cyclic.
\end{coro}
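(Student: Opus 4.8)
The plan is to derive this as a direct consequence of Theorem~\ref{non abelian cond}. The statement has two directions. The forward direction, that a cyclic group admits a Lee metric, is already established: every cyclic group (finite or infinite) carries the classical Lee metric via \eqref{Lee metric}, so this half requires no new argument. The substance lies in the converse: if $G$ is torsion-free and admits a Lee metric, then $G$ must be cyclic.

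For the converse I would argue by contrapositive, supposing $G$ is torsion-free but \emph{not} cyclic, and verifying that the hypotheses of Theorem~\ref{non abelian cond} hold, so that $G$ has no Lee metric. The key observation is that in a torsion-free group the only element of finite order is the identity $e$. Consequently $G$ has no elements of order $2$ or $4$ at all, so the subgroup $G_{2,4}$ generated by such elements is trivial: $G_{2,4} = \{e\}$. Since $G$ is non-cyclic (in particular non-trivial), this gives $G_{2,4} \neq G$, which is the first hypothesis of the theorem. For the second hypothesis, $Z(G)$ cannot contain an element of order $2$ because no nontrivial element of $G$ has finite order; hence $Z(G)$ has no elements of order $2$. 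Both hypotheses of Theorem~\ref{non abelian cond} are therefore satisfied, and the theorem yields that $G$ has no Lee metric, completing the contrapositive.

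One subtle point worth checking is that Theorem~\ref{non abelian cond} is stated for a ``non-cyclic group'' without an explicit finiteness restriction in its hypothesis, whereas torsion-free groups of interest here may be infinite. The proof of that theorem already handles the case $\mathrm{ord}(a) = \infty$ in its opening paragraph (where it shows that an infinite-order generator forces cyclicity), so the infinite torsion-free case is genuinely covered. I would note this to reassure the reader that the corollary applies to infinite groups such as $\Z^n$ for $n \geq 2$, which are torsion-free and non-cyclic and hence admit no Lee metric.

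I do not anticipate a serious obstacle here, since the corollary is essentially a clean specialization: the torsion-free condition is precisely what makes both hypotheses of Theorem~\ref{non abelian cond} automatic. The only thing requiring care is the logical bookkeeping of the contrapositive and making explicit that ``torsion-free'' forces $G_{2,4} = \{e\}$ and forbids involutions in the center. The proof can thus be written in just a few lines, which is appropriate for a corollary.
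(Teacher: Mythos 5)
Your proposal is correct and matches the paper exactly: the paper states Corollary~\ref{torsion-free} as a direct consequence of Theorem~\ref{non abelian cond}, and your argument (torsion-freeness forces $G_{2,4}=\{e\}\neq G$ and excludes involutions from $Z(G)$, so the theorem applies to any non-cyclic torsion-free $G$) is precisely the intended verification. Your added remark that the proof of Theorem~\ref{non abelian cond} already handles elements of infinite order is a useful sanity check but not a departure from the paper's route.
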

%

We next give examples of infinite groups not admitting Lee metrics. 
\begin{exam}
($i$) If $G=K \times H$ with $K$ an abelian group with center of odd order and $H$ a countable infinite non-abelian group with trivial center then, by the previous theorem, $G$ has no Lee metrics. For instance, we can take 
$$G=\Z_m \times H$$ 
with $m$ odd and 
$H=\prod_{i=1}^\infty H_i$,
where each $H_i$ is a symmetric $\Sym_n$ or an alternating group $\mathbb{A}_n$ for some $n\ge 3$, or  $H=\mathrm{PSL}(2,R)=\mathrm{SL}(2,R)/\{\pm I\}$ 
with $R=\Z$ or $\Q$. As the simplest examples we can take $G=\Z_3 \times {\rm PSL}(2,\Z)$ or $G=\Z_3 \times \Sym_3^\N$. 

\sk 
($ii$) Let $p$ be an odd prime and let $G$ be the infinite extraspecial group defined by the presentation
$$\langle z, \{x_i\}_{i\in \Z \smallsetminus \{0\}} \,:\, x_i^p=z^p=[x_i,z]=[x_i,x_j]=1 \, (|i|\ne|j|), [x_i,x_{−i}]=z \rangle.$$
Then, $Z(G)=\langle z \rangle$ has order $p$, and hence by the previous theorem $G$ has no Lee metrics. 
\hfill $\lozenge$
\end{exam}

Clearly, for a non-abelian group $G$ of odd-order we have $G_2=G_{2,4}=\varnothing$ and $Z(G)$ of odd order, and hence $G$ does not have Lee metrics. 

On the other hand, we have seen that some families of metacyclic groups (dihedral and dicyclic) do have Lee metrics. Note that in both cases one condition of the above proposition fails to hold. 
Indeed, dihedral groups $\D_{n}$ do not satisfy $G_2 \ne G$ while dicyclic groups $\Q_{4n}$ do not satisfy $G_{2,4} \ne G$.

Using the previous proposition we can list groups of order $<100$ with no Lee metrics.
Of course, there may be some other groups with no Lee metrics not covered by this proposition.  
\begin{exam} \label{ex no lee}
There are 64 groups of order $\le 100$ satisfying the hypothesis in Theorem \ref{non abelian cond}.
We give them all in Table \ref{noLee<100} listed with their GAP id number. 
For instance, for the groups $G=\mathbb{A}_4$ or $\Z_3 \times \Sym_3$ we have $G_2 =G_{2,4} \simeq \Z_2^2$, $Z(\mathbb{A}_4)=\{id\}$ and $Z(\Z_3 \times \Sym_3) \simeq \Z_3$.

{\small 
\begin{table} [h!]
		\caption{Groups with no Lee metric from Theorem \ref{non abelian cond}}  \label{noLee<100}
\begin{tabular}{||cc||cc||cc||cc||}
	\hline
group & GAP & group & GAP & group & GAP & group & GAP \\ 
\hline
$\mathbb{A}_4$            	  & (12,3) & $\Z_3 \times \D_9$    			    & (54,3) & $\Z_5 \times \D_7$   		& (70,2)  & 
$\Z_3 \times \mathrm{He}_3$   & (81,12) \\ 
$\Z_3 \times \Sym_3$ 	  	  & (18,3) & $\Sym_3 \times \Z_9$  			    & (54,4) & $\Z_3^2 \times \Z_8$ 		& (72,39) & 
$\Z_3 \times 3_-^{1+2}$  	  & (81,13) \\
$\Z_7 \rtimes \Z_3$ 	  	  & (21,1) & $\Z_3^2 \rtimes \Z_6$ 			    & (54,5) & $\Z_3 \times \Sym_4$ 		& (72,42) & 
$\Z_9 \circ \mathrm{He}_3$    & (81,14) \\
$\Z_3^2\rtimes \Z_3$  	  	  & (27,3) & $\Z_9 \rtimes \Z_6$   			    & (54,6) & $\Sym_3 \times \mathbb{A}_4$ & (72,44) & 
$\Z_7 \times \mathbb{A}_4$    & (84,10) \\
$3_{+}^{1+2}$ 			  	  & (27,4) & $\Sym_3 \times \Z_3^2$  		    & (54,12)& $\Z_5^2 \times \Z_3$ 		& (75,2)  & 
$\Z_7 \rtimes \mathbb{A}_4$   & (84,11) \\ 
$\Z_5 \times \Sym_3$  	  	  & (30,1) & $\Z_3 \times (\Z_3 \rtimes \Sym_3$)& (54,13)& $\Z_{13} \times \Z_6$      	& (78,1)  & 
$\Z_5 \times \D_9$    		  & (90,1) \\
$\Z_3 \times \D_5$    	  	  & (30,2) & $\Z_{11} \rtimes \Z_5$             & (55,1) & $\Sym_3 \times \Z_{13}$		& (78,3)  & 
$\Z_9 \times \D_5$            & (90,2) \\
$\Z_3 \cdot \mathbb{A}_4$ 	  & (36,3) & $\Z_2^3 \rtimes \Z_7$              & (56,11)& $\Z_3 \times \D_{13}$ 		& (78,4)  & 
$\Z_3^2 \times \D_5$ 		  & (90,5) \\
$\Z_3 \times\mathbb{A}_4$ 	  & (36,11)& $\Z_{19} \rtimes \Z_3$ 		    & (57,1) & $\Z_2^4 \rtimes \Z_5$		& (80,49) & 
$\Sym_3 \times \Z_{15}$       & (90,6) \\
$\Z_{13} \rtimes \Z_3$    	  & (39,1) & $\Z_3 \times (\Z_5 \rtimes \Z_4)$  & (60,6) & $\Z_3^2 \times \Z_9$			& (81,3)  & 
$\Z_{3} \times \D_{15}$	      & (90,7) \\
$\Z_7 \rtimes \Z_6$       	  & (42,1) & $\Z_5 \times \mathbb{A}_4$         & (60,9) & $\Z_9 \rtimes \Z_9$			& (81,4)  & 
$\Z_5 \times (\Z_3 \rtimes \D_{15})$   & (90,8) \\
$\Sym_3 \times \Z_7$      	  & (42,3) & $\Z_7 \rtimes \Z_9$                & (63,1) & $\Z_{27} \rtimes \Z_3$		& (81,6)  & 
$\Z_{31} \rtimes \Z_3$        & (93,1) \\
$\Z_3 \times \Z_7$        	  & (42,4) & $\Z_3 \times (\Z_7 \rtimes \Z_3)$  & (63,3) & $\Z_3 \wr \Z_3$ 				& (81,7)  & 
$\Z_2^4 \rtimes \Z_6$         & (96,70) \\
$\Z_4^2 \rtimes \Z_3$     	  & (48,3) & $\Sym_3 \times \Z_{11}$ 			& (66,1) & $\mathrm{He}_3 \cdot \Z_3$	& (81,8)  & 
$\Z_4^2 \rtimes \Z_6$  		  & (96,71) \\
$\Z_4^2 \rtimes \mathbb{A}_4$ & (48,50)& $\Z_3 \times \D_{11}$              & (66,2) & $\mathrm{He}_3 \rtimes \Z_3$ & (81,9)  & 
$\Z_2^3 \cdot \mathbb{A}_4$   & (96,72) \\
$\Z_5 \times \D_5 $ 		  & (50,3) & $\Z_7 \times \D_5$  				& (70,1) & $\Z_3 \cdot \mathrm{He}_3$   & (81,10) & 
$\Z_7 \times \D_7$ 			  & (98,3) \\ \hline
\end{tabular}
\end{table}}
In the table, $\rm{He}_3$ denotes the Heisenberg group of order 27 (for any odd prime $p$ there is a Heisenberg group, the unique non-abelian group of order $p^3$ and exponent $p$). The symbols $G \wr H$, $G \circ H$ and $G \cdot H$ denote the wreath product, the central product and the non-split group extensions of $G$ and $H$, respectively.
\hfill $\lozenge$
\end{exam}

Note that in the list of Example \ref{ex no lee} there are three affine groups of finite fields, 
$\mathrm{Aff}(\ff_7) = \Z_7 \rtimes \Z_6$, $\mathrm{Aff}(\ff_8) = \Z_2^3 \rtimes \Z_7$ and $\mathrm{Aff}(\ff_9) = \Z_3^2 \rtimes \Z_8$.
More generally, as a consequence of the previous result, we now show that the affine group 
	$$\mathrm{Aff}(\ff_q) = \ff_q \rtimes \ff_q^*$$ 
of a finite field of $q$ elements does not admit Lee metrics for any $q\ne 2,3,5$. 
In the semidirect product, $\ff_q$ stands for the additive group while $\ff_q^*$ is the multiplicative cyclic group of non-trivial elements.

\begin{prop} \label{aff}
Let $q\neq 2,3,5$ be a prime power. The affine group $\mathrm{Aff}(\ff_q)$ has no Lee metrics.
Moreover, for a subgroup of $\mathrm{Aff}(\ff_q)$ of the form $G=\ff_q \rtimes H$ with $H$ a subgroup of $\ff_q^*$ we have:
\begin{enumerate}[$(a)$]
	\item If $q$ is even, then $G$ has Lee metrics if and only if $H=\{1\}$. \smallskip

	\item Let $q$ be odd. 
	If $H=1$ then $G$ has Lee metrics if and only if $q$ is prime. If $H=\Z_2$ and $q$ is prime then $G$ has Lee metrics. 
	If $H\not\simeq \{1\},\Z_2, \Z_4$, then $G$ has no Lee metrics.
\end{enumerate}
\end{prop}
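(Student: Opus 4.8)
The plan is to reduce everything to Theorem \ref{non abelian cond} by analysing orders of elements in a semidirect product $G=\ff_q\rtimes H$ with $H\le\ff_q^*$, and to read off the two existence statements from results already available. Writing elements as pairs $(b,a)$ with $b\in\ff_q$, $a\in H$ and multiplication $(b_1,a_1)(b_2,a_2)=(b_1+a_1b_2,a_1a_2)$, one has $(b,a)^m=\big(b(1+a+\cdots+a^{m-1}),a^m\big)$. The first thing I would establish is a clean order count: if $a\neq 1$ and $d=ord(a)$ in $H$, then the geometric sum vanishes because $(a-1)(1+\cdots+a^{d-1})=a^d-1=0$ and $\ff_q$ is a field, so $(b,a)^d=e$ and hence $ord(b,a)=ord(a)$ independently of $b$; while $(b,1)$ has order $1$ if $b=0$ and order $p=\mathrm{char}\,\ff_q$ otherwise. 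A direct computation also gives $Z(G)=\{e\}$ whenever $H\neq\{1\}$, so $Z(G)$ has no element of order $2$ and $G$ is non-cyclic (being non-abelian). Thus, once I know $G_{2,4}\neq G$, Theorem \ref{non abelian cond} applies and $G$ has no Lee metric.

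Next I would dispose of the existence (``if'') directions. If $H=\{1\}$ then $G=\ff_q$ is the additive group $\Z_p^n$; for $q$ even this is $\Z_2^n$, which has a Lee metric by Proposition \ref{some Lee} with trivial $G$, and for $q$ odd Theorem \ref{odd order} gives a Lee metric exactly when $\Z_p^n$ is cyclic, i.e.\ when $n=1$ and $q$ is prime (cf.\ Remark \ref{fqs}). If $q$ is an odd prime and $H=\Z_2$, then $H=\{1,-1\}$ and $-1$ acts on $\ff_q=\Z_q$ by inversion, so $G=\Z_q\rtimes\Z_2\cong\D_q$, which has a Lee metric by Theorem \ref{Dn Lee}.

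The core of the argument is to compute $G_{2,4}$ in the remaining cases and show it is proper. For $q$ even, $|H|$ divides $q-1$ and is odd, so no $(b,a)$ with $a\neq 1$ can have order $2$ or $4$; since $\mathrm{char}\,\ff_q=2$, the elements of order $2$ are exactly the non-trivial translations $(b,1)$ with $b\neq 0$ and there are none of order $4$. Hence $G_{2,4}=\ff_q$, which is proper iff $H\neq\{1\}$, proving (a). For $q$ odd with $H\neq\{1\}$, if $|H|$ is odd then $H$, and hence $G$, has no element of order $2$ or $4$, so $G_{2,4}=\{e\}\neq G$. If $|H|$ is even then $-1\in H$, the order-$2$ elements are the $(b,-1)$ and the order-$4$ elements are the $(b,a)$ with $ord(a)=4$; here I would show $\ff_q\subseteq G_{2,4}$ using $(b_1,-1)(b_2,-1)=(b_1-b_2,1)$, which already sweeps out every translation. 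Since every generator of $G_{2,4}$ projects into $\{h\in H:h^4=1\}$ and $H$ is cyclic, this yields $G_{2,4}=\ff_q\rtimes\{h\in H:h^4=1\}$, of order $q\cdot\gcd(|H|,4)$. Therefore $G_{2,4}=G$ iff $|H|\mid 4$, i.e.\ iff $H\simeq\{1\},\Z_2,\Z_4$; for every other $H$ we get $G_{2,4}\neq G$ and Theorem \ref{non abelian cond} gives no Lee metric, proving the last clause of (b). Finally, $\mathrm{Aff}(\ff_q)$ is the case $H=\ff_q^*$, $|H|=q-1$: when $q$ is even and $q\neq2$ we have $H\neq\{1\}$, and when $q$ is odd and $q\neq3,5$ we have $\ff_q^*\cong\Z_{q-1}$ with $q-1\notin\{2,4\}$; in either situation $G_{2,4}\neq G$ and $\mathrm{Aff}(\ff_q)$ has no Lee metric.

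The step I expect to be most delicate is the precise determination of $G_{2,4}$, specifically the identity $ord(b,a)=ord(a)$ for $a\neq 1$ (which pins down which pairs are involutions or have order $4$) together with the inclusion $\ff_q\subseteq G_{2,4}$ when $|H|$ is even. Everything else is a routine verification of the three hypotheses of Theorem \ref{non abelian cond} or a direct appeal to the existence results for $\Z_2^n$, $\Z_p$ and $\D_q$.
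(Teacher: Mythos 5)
Your proposal is correct and follows essentially the same route as the paper: verify the hypotheses of Theorem \ref{non abelian cond} by showing that $\ff_q\rtimes H$ has trivial center for $H\neq\{1\}$ and that its elements of order $2$ and $4$ project into the $4$-torsion of $H$ (so $G_{2,4}\neq G$ unless $|H|$ divides $4$), while the existence cases reduce to $\Z_2^n$, $\Z_p$ and $\D_q$ exactly as in the paper. Your explicit formula $ord(b,a)=ord(a)$ for $a\neq1$ and the exact identification $G_{2,4}=\ff_q\rtimes\{h\in H:h^4=1\}$ are a slightly cleaner packaging of the paper's matrix-entry computation, but the argument is the same.
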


\begin{proof}
First note that since $\mathrm{Aff}(\ff_2)=\Z_2$ and $\mathrm{Aff}(\ff_3)=\Z_3 \rtimes \Z_2 = \D_3$, they have Lee metrics.
So we assume that $q\ge 4$. 
Also, it is known that affine groups $\mathrm{Aff}(\ff_q)$ are Frobenius and hence have trivial center (\cite{Kar}), and since $\mathrm{Aff}(\ff_q) \simeq \ff_q \rtimes \ff_q^*$ it has even order $q(q-1)$.

Moreover, if $H$ is trivial, the subgroup $G=\ff_q \rtimes H$ is just the finite field $\ff_q$ and the existence or not of Lee metrics in this case was established in Remark \ref{fqs}. Namely, if $q$ is even $\ff_q$ has Lee metrics, and if $q$ is odd $\ff_q$ has Lee metrics if and only if $q$ is prime. Furthermore, if $H=\Z_2$ and $q$ is prime, the subgroup $G=\ff_q \rtimes H$ is isomorphic to the dihedral group $\D_q$ (we will see below that this group is non-abelian and $\D_q$ is the only non-abelian group of order $2q$, for $q$ prime), which we know by Theorem \ref{Dn Lee} that has Lee metrics. 
So, from now we consider subgroups $G=\ff_q \rtimes H$ of the affine group $\mathrm{Aff}(\ff_q)$ with $H$ a subgroup of $\ff_q^*$ and $H \ne \{1\}$. 

The group $\mathrm{Aff}(\ff_q)$, with product 
$$(x,y)\cdot (x',y')=(xy'+y,yy'),$$ 
can be naturally seen as the subgroup of $2\times 2$ matrices of the form $(\begin{smallmatrix} a & b \\ 0 & 1 \end{smallmatrix})$ with $a \in \ff_q^*$ and $b\in \ff_q$, that is
$$\mathrm{Aff}(\ff_q) = \{ (\begin{smallmatrix} a & b \\ 0 & 1 \end{smallmatrix}) : a \in \ff_q, b\in \ff_q \}.$$ 
With this identification, every subgroup of the form $\ff_q \rtimes H$, with $H < \ff_q^*$, corresponds to 
$$\ff_q \rtimes H = \{ (\begin{smallmatrix} a & b \\ 0 & 1 \end{smallmatrix}) : a \in H, b\in \ff_q \}.$$ 
We will show that, for $H\ne \{1\}$, these groups have trivial center and they are not generated by its elements of order 2 and 4, and hence the conditions in Theorem \ref{non abelian cond} hold.

To check the condition on the center, notice that $(\begin{smallmatrix} a & b \\ 0 & 1 \end{smallmatrix})(\begin{smallmatrix} a' & b' \\ 0 & 1 \end{smallmatrix})=(\begin{smallmatrix} a' & b' \\ 0 & 1 \end{smallmatrix})(\begin{smallmatrix} a & b \\ 0 & 1 \end{smallmatrix})$ if and only if $ab'+b=a'b+b'$. Thus, if $(\begin{smallmatrix} a & b \\ 0 & 1 \end{smallmatrix})$ is in the center of $\ff_q \rtimes H$, then we see that for example taking $b'=0$ and $a'\neq 1$ (since $H\ne \{1\}$) we must have $b=0$. Then, the condition $ab'+b=a'b+b'$ becomes $ab'=b'$, and this is true for all $b'\in\F_q$ if and only if $a=1$. Thus the only matrix commuting with every other element is the identity, and hence the center of $\ff_q \rtimes H$ is trivial.

In what follows, let $G=\ff_q \rtimes H$ (if $H=\ff_q^*$ then we have the whole affine group $\mathrm{Aff}(\ff_q)$).
If $q$ is even, the elements of order 2 of $G$ are exactly the matrices of the form 
$$(\begin{smallmatrix} 1 & b \\ 0 & 1 \end{smallmatrix}).$$
On the other hand, there are no elements $(\begin{smallmatrix} a & b \\ 0 & 1 \end{smallmatrix})$ of order 4, since the square of such element would have order 2, that is $(\begin{smallmatrix} a & b \\ 0 & 1 \end{smallmatrix})^2=(\begin{smallmatrix} 1 & b \\ 0 & 1 \end{smallmatrix})$, which implies that $a^2=1$, with $a\in \ff_q^*$, but this has only the solution $a=1$. Hence, we have $G_2=G_{2,4} \ne G$. Then, we have that the elements of order 2 generate all the group $G$ if and only if $H=\{1\}$ (for instance if $q=2$). 

Now, if $q$ is odd, the elements of order 2 of $G$ are exactly the matrices of the form 
$$(\begin{smallmatrix} -1 & b \\ 0 & 1 \end{smallmatrix}).$$ 
If $q\equiv 3 \pmod 4$ there are no elements of order 4, and hence $G_2=G_{2,4}$. In this case, the elements of order 2 generates all the group if and only if $H\simeq 1$ or $\Z_2$ (for example the affine group $\mathrm{Aff}(\ff_3)$). 
On the other hand, if $q \equiv 1 \pmod 4$, any element of order 4 is of the form 
$$(\begin{smallmatrix} \pm \zeta & b \\ 0 & 1 \end{smallmatrix})$$
where $\zeta^2=-1$.
One can check that, multiplying elements of order 2 and 4 and looking at the first element of the matrix obtained, we only get elements with first matrix entry equal to $1,-1,\zeta$ or $-\zeta$. Therefore the elements of order 2 and 4 cannot generate the whole group if $H\not \simeq \{1\},\Z_2,\Z_4$. Note that this is the case for the affine group $\mathrm{Aff}(\ff_5)=\ff_5 \rtimes \Z_4$ and this is the reason why we have to exclude $q=5$ in the statement.

Thus, we see that we are in the hypothesis of Theorem \ref{non abelian cond} and hence $\mathrm{Aff}(\ff_q)$ do not have Lee metrics for any $q\neq 2,3,5$. 
\end{proof}

From the proposition, the only subgroups of the form $\ff_q \rtimes H$ of the affine groups $\mathrm{Aff}(\ff_q)$ for which we cannot decide whether they have Lee metrics or not are those with $H=\Z_2$ and $q$ not prime or $H=\Z_4$, that is the subgroups of the form $\ff_q \rtimes \Z_2$ ($q$ not prime) or $\ff_q \rtimes \Z_4$ in the case $q$ odd. See for instance the subgroups of $\mathrm{Aff}(\ff_9)$ in Table \ref{tab aff}. 

Notice that since 
$$\mathrm{Aff}(\ff_4) = (\Z_2\times \Z_2) \rtimes \Z_3 \simeq \mathbb{A}_4,$$ 
we have another proof that $\mathbb{A}_4$ has no Lee metrics. This explains in more generality the result obtained in Proposition \ref{A4}. This also follows from Theorem \ref{non abelian cond}. 
The group $\mathrm{Aff}(\ff_5)$  is excluded by Proposition \ref{aff} so we do not know, by our previous results, if it has Lee metrics or not. 
Notice that many groups appearing in Table \ref{noLee<100} in Example \ref{ex no lee} are actually affine groups $\mathrm{Aff}(\ff_q)$ or subgroups of affine groups as can be checked from the following table. In Table \ref{tab aff} below we show the first affine groups $\mathrm{Aff}(\ff_q)$ and their subgroups of the form $\ff_q \rtimes H$ with $H\ne \{1\}$ and we indicate when Proposition \ref{aff} applies.
\renewcommand{\arraystretch}{.9}
\begin{center}
	\begin{table}[h!]
		\caption{Subgroups of the affine groups $\mathrm{Aff}(\ff_q)$ for $7 \le q \le 19$} \label{tab aff}
		\begin{tabular}{ccccc}
\hline
Affine group & subgroup & GAP & Prop \ref{aff} & Lee metrics \\ \hline
$\mathrm{Aff}(\ff_7)$ & $\Z_7  \rtimes \Z_6$  & (42,1)  & \ding{51} & \ding{55} \\
					  & $\Z_7  \rtimes \Z_3$  & (21,1)  & \ding{51} & \ding{55} \\ \hline
$\mathrm{Aff}(\ff_8)$ & $\Z_2^3\rtimes \Z_7$  & (56,1)  & \ding{51} & \ding{55} \\ \hline
$\mathrm{Aff}(\ff_9)$ & $\Z_3^2\rtimes \Z_8$  & (72,39) & \ding{51} & \ding{55} \\
					  & $\Z_3^2\rtimes \Z_4$  & (36,9)  & \ding{55} & ?? \\
					  & $\Z_3^2 \rtimes \Z_2$& (18,4)  & \ding{55} & \ding{55} \\ \hline
$\mathrm{Aff}(\ff_{11})$& $\Z_{11} \rtimes \Z_{10}$  & (110,1) & \ding{51} & \ding{55} \\
					  & $\Z_{11} \rtimes \Z_5$& (55,1) & \ding{51} & \ding{55} \\
					  & $\Z_{11} \rtimes \Z_2$  & (22,1) & \ding{55} & \ding{51} \\ \hline
$\mathrm{Aff}(\ff_{13})$& $\Z_{13} \rtimes \Z_{12}$  & (156,7) & \ding{51} & \ding{55} \\
                        & $\Z_{13} \rtimes \Z_6$   & (78,1) & \ding{51} & \ding{55} \\
                        & $\Z_{13} \rtimes \Z_4$   & (52,3) & \ding{55} & ?? \\ 
					    & $\Z_{13} \rtimes \Z_3$   & (39,1) & \ding{51} & \ding{55} \\
                        & $\Z_{13} \rtimes \Z_2$   & (26,1) & \ding{55} & \ding{51} \\ \hline
$\mathrm{Aff}(\ff_{16})$& $\Z_2^4 \rtimes \Z_{15}$ & (240,191) & \ding{51} & \ding{55} \\
                        & $\Z_2^4 \rtimes \Z_5$    & (80,49)   & \ding{51} & \ding{55} \\
                        & $\Z_2^4 \rtimes \Z_3$    & (48,50)  & \ding{51} & \ding{55} \\ \hline
$\mathrm{Aff}(\ff_{17})$& $\Z_{17} \rtimes \Z_{16}$& (272,50) & \ding{51} & \ding{55} \\
						& $\Z_{17} \rtimes \Z_8$   & (136,12) & \ding{51} & \ding{55} \\
						& $\Z_{17} \rtimes \Z_4$   & (68,3)  & \ding{55} & ?? \\ 
						& $\Z_{17} \rtimes \Z_2$   & (34,1)  & \ding{55} & \ding{51} \\ \hline
$\mathrm{Aff}(\ff_{19})$& $\Z_{19} \rtimes \Z_{18}$& (342,7) & \ding{51} & \ding{55} \\
						& $\Z_{19} \rtimes \Z_9$   & (171,3) & \ding{51} & \ding{55} \\
						& $\Z_{19} \rtimes \Z_6$   & (114,1) & \ding{51} & \ding{55} \\ 
						& $\Z_{19} \rtimes \Z_3$   & (57,1)  & \ding{51} & \ding{55} \\
						& $\Z_{19} \rtimes \Z_2$   & (38,1)  & \ding{55} & \ding{51} \\ \hline
 	    \end{tabular}
	\end{table}
\end{center} 


\begin{rem}
Similar to the affine groups, we can consider the semidirect products of the form $\Z_p \rtimes \Z_q$ with $p,q$ distinct primes.
This product exists for $p=2$ or for $p$ odd with $p\equiv 1 \pmod q$.
From our previous results, we know when these groups have Lee metrics. 
First, note that $\Z_2 \rtimes \Z_q$ is the direct product $\Z_2 \times \Z_q$ and hence by Proposition \ref{some Lee} has Lee metrics.
Now, if $p$ and $q$ are both odd then $\Z_p \rtimes \Z_q$ do not have Lee metrics since the group is of odd order but not cyclic (see Theorem \ref{odd order}). Finally, if $q=2$ (hence $p$ is odd) then the group $\Z_p \rtimes \Z_2$ is the dihedral group $\D_p$, which we know that has Lee metrics.		
\end{rem}

\goodbreak

\section{Groups of small order}
In this final section, we focus on the existence of Lee metrics for groups of small order. 
More precisely, in Tables \ref{tablita}--\ref{tablitab2} we list the 93 groups of order up to 31, showing which of them have (not) Lee metrics and why (not). 
In any case, when possible, we refer to some previous results implying the existence or not of Lee metrics for each group. 
In the cases that are not covered by our results we have used direct computations with the software packages Sage \cite{Sage} and GAP \cite{GAP}.
For these calculations we need to consider all possible weight functions taking all the values from $1$ to $k(G)$ --i.e., 
every permutation of possible values from $1$ to $k(G)$-- and check if any of those weight functions define a metric on $G$ corresponding to the Lee partition. Thus, we have to check $k(G)!$ possibilities. This is not a feasible computation when the order of the group, and hence the value of $k(G)$, grows beyond the values of the tables below.

\noindent 
\textit{Comments on the tables}: In the second and third columns we list the GAP id of the groups, which
is a label that uniquely identifies the group in GAP. In the second column we also indicate between parenthesis the number of groups of the given order.

\renewcommand{\arraystretch}{.9}
\begin{table}[h!] 
	\caption{Lee metrics on groups of order $1 \le n \le 15$}  \label{tablita}
$$\begin{tabular}{|c|c|c|c|c|c|c|} 
	\hline 
\# & order  & id & $G$ 		& $k(G)$ & Lee metric	& why 	 \\	\hline
1 & $1$ (1) & 1  & $\{e\}$ 	& $0$ & \ding{51}	& trivial    \\ \hline
2 & $2$ (1) & 1  & $\Z_2$	& $1$ & \ding{51}	& cyclic     \\ \hline
3 & $3$ (1) & 1  & $\Z_3$	& $1 $  & \ding{51}	& cyclic     \\ \hline
4 & $4$ (2) & 1  & $\Z_4$	& $2$ & \ding{51}	& cyclic      \\
5 &     & 2 & $\Z_2^2$  	& $3$ & \ding{51}	& Prop.\@ \ref{some Lee}  		 \\ \hline
6 & $5$ (1)& 1 & $\Z_5$ 	& $2$ & \ding{51}	& cyclic      \\ \hline
7 & $6$ (2)& 2  & $\Z_6$	& $3$ & \ding{51}  	& cyclic    \\    
8 &     & 1  & $\D_3$   	& $4$ & \ding{51}	& Thm.\@ \ref{Dn Lee}			 \\ \hline
9 & $7$ (1)& 1  & $\Z_7$ 	& $3$ & \ding{51}	& cyclic 	   \\ \hline  
10 & $8$ (5)& 1 & $\Z_8$ 	& $4$ & \ding{51} 	& cyclic       \\  
11 &    & 2 & $\Z_4 \times \Z_2$	& $5$ & \ding{51} 	& Prop.\@ \ref{some Lee}   \\ 
12 &    & 5 & $\Z_2^3$			& $7$	 & \ding{51}	& Prop.\@ \ref{some Lee}   \\ 
13 &    & 3 & $\D_4$ 			& $6$	 & \ding{51}	& Thm.\@ \ref{Dn Lee}	   \\
14 & 	& 4 & $\Q_8$ 			& $4$	 & \ding{51}	& Exam.\@ \ref{examQ8}, Prop.\@ \ref{charact bi} 	\\ \hline
15 & $9$ (2)& 1 & $\Z_9$ 		& $4$	 & \ding{51} 	& cyclic			\\  
16 & 	& 2	& $\Z_3^2$ 			& $4$	 & \ding{55} 	& Exam.\@ \ref{z3xz3}, Thm.\@ \ref{odd order}				\\ \hline  
17 & $10$ (2)& 2 & $\Z_{10}$ 	& $5$	 & \ding{51} 	& cyclic			\\ 
18 &	  & 1 & $\D_5$ 			& $7$	 & \ding{51}  	& Thm.\@ \ref{Dn Lee} \\ \hline
19 & $11$ (1)& 1 & $\Z_{11}$ 	& $5$	 & \ding{51}	& cyclic  \\ \hline
20 & $12$ (5)& 2 & $\Z_{12}$ 	& $6$	 & \ding{51} 	& cyclic \\ 
21 &	& 5	& $\Z_3 \times \Z_2^2$ 	 & $7$ & \ding{51}  	& Prop.\@ \ref{some Lee}				  \\ 
22 &	& 4	& $\D_6$ 			& $9$	 & \ding{51}	& Thm.\@ \ref{Dn Lee}	 		   \\
23 & 	& 1	& $\Q_{12}$  		& $6$	 & \ding{51}   	& Thm.\@ \ref{Qn Lee}	 		  \\ 
24 & 	& 3 & $\mathbb{A}_{4}$ 	& $7$	 & \ding{55} 	& Prop.\@ \ref{A4}, Thm.\@ \ref{non abelian cond}	 	  \\ \hline
	25 & $13$ (1)& 1 & $\Z_{13}$ 	& $6$	 & \ding{51} 	& cyclic		   \\ \hline
26 & $14$ (2)&	2 & $\Z_{14}$ 	& $7$	 & \ding{51} 	& cyclic		   \\ 
27 & 	  &	1 & $\D_7$  		& $10$	 & \ding{51} 	& Thm.\@ \ref{Dn Lee}	 		  \\ \hline 
28 & $15$ (1)& 1 & $\Z_{15}$ 	& $7$	 & \ding{51}	& cyclic			 \\ \hline
\end{tabular}$$
\end{table}

\renewcommand{\arraystretch}{1.15}
\begin{table}[h!] 
	\caption{Lee metrics on groups of order $16 \le n \le 23$} \vspace{-4mm} \label{tablitb}
	$$\begin{tabular}{|c|c|c|c|c|c|c|} 
	\hline 
	\# & order  & id & $G$ 		& $k(G)$ & Lee metric	& why  \\	\hline
	29 & $16$ (14)& 1 & $\Z_{16}$ 	& $8$	 & \ding{51} 	& cyclic			   \\
	30 &	  & 5 & $\Z_{8} \times \Z_2$ & $9$   & \ding{51} 	& Prop.\@ \ref{some Lee}	 				  \\
	31 &	  &	2 & $\Z_4^2$ 	& $9$		 & \ding{51} &  sage			  \\
	32 &	  &	10 & $\Z_{4} \times \Z_2^2$  & $11$   & \ding{51} & Prop.\@ \ref{some Lee}					  \\
	33 &	  &	14 & $\Z_2^4$ 		& $15$	 & \ding{51} 	& Prop.\@ \ref{some Lee} 	  \\
	34 &	  & 7 & $\D_{8}$ 		& $12$	 & \ding{51} 	& Thm.\@ \ref{Dn Lee}    \\
	35 &	  & 9 & $\Q_{16}$ 		& $8$	 & \ding{51}  & Thm.\@ \ref{Qn Lee}		  \\ 
	36 &	  &	8 & $Q\D_{4}^-$ 	& $10$	 & \ding{51} & sage \\ 
	37 &	  &	6 & $Q\D_{4}^+$  	& $9$	 & \ding{51} & sage \\ 
	38 &	  &	11 & $\D_4 \times \Z_2$  & $13$ & \ding{51} & Prop.\@ \ref{some Lee}  \\
	39 & 	  &	12 & $\Q_8 \times \Z_2$ & $9$ & \ding{51} & Prop.\@ \ref{some Lee}  \\
	40 &	  & 4 & $\Z_4 \rtimes \Z_4$   & $9$	 & \ding{51} & sage  \\ 
	41 & 	  & 3 & $\Z_2^2 \rtimes \Z_4$  & $11$  & \ding{51} & sage \\
	42 &	  & 13 & $(\Z_4\times \Z_2)\rtimes \Z_2$  & $11$ & \ding{51}  & sage   \\ \hline
%
%
43& 17 (1)& 1 & $\Z_{17}$  & $8$ & \ding{51} &  cyclic \\ \hline
44 & 18 (5)& 2 & $\Z_{18}$ & $9$ & \ding{51}  & cyclic  \\
45 &       & 5 &  $\Z_3^2 \times \Z_2 $ & $9$  & \ding{55} & sage \\ 
46 &       & 1 & $ \D_9 $  & $13$  & \ding{51}   &  Thm.\@ \ref{Dn Lee} \\ 
47 &       & 3 & $ \D_3 \times \Z_3 $ & $10$  & \ding{55} & Thm.\@ \ref{non abelian cond} \\
48 &       & 4 & $ \Z_3^2 \rtimes \Z_2 $ & $13$ & \ding{55} & sage  \\ \hline
49 & 19 (1) & 1 & $ \Z_{19}$ & $9$ & \ding{51}  &  cyclic \\ \hline
50 & 20 (5) & 2 &$\Z_{20}$ & $10$ & \ding{51}  &  cyclic \\
51 & & 5 & $\Z_5 \times \Z_2^2$  & $11$ & \ding{51}  &  Prop.\@ \ref{some Lee} \\ 
52 & & 4& $\D_{10}$ & $15$ & \ding{51} &  Thm.\@ \ref{Dn Lee}   \\
53 & & 1 & $\Q_{20}$ & $10$ & \ding{51} &  Thm.\@ \ref{Qn Lee} \\ 
54 &  & 3 & $\Z_5 \rtimes \Z_4$ & $12$ & \ding{55} & sage  \\ \hline
55 & 21 (2)& 2&$ \Z_{21}$ & $10$ & \ding{51} &  cyclic \\ 
56 &  &1 & $\Z_7 \rtimes \Z_3$ & $10$ & \ding{55} & Thm.\@ \ref{odd order}, Thm.\@ \ref{non abelian cond}, Prop.\@ \ref{aff}  \\ \hline
57 & 22 (2) & 2 & $\Z_{22}$ & $11$ & \ding{51}  &  cyclic \\ 
58 &  & 1 & $\D_{11}$ & $16$ & \ding{51}  & Thm.\@ \ref{Dn Lee} \\ \hline
59 & 23 (1)& 1 & $\Z_{23}$ & $11$ & \ding{51} &  cyclic \\ \hline
\end{tabular}$$
\vspace{-8mm}
\end{table}

\renewcommand{\arraystretch}{1.15}
\begin{table}[h!] 
	\caption{Lee metrics on groups of order $24 \le n \le 31$}  \vspace{-1mm} \label{tablitab2}
	$$\begin{tabular}{|c|c|c|c|c|c|c|} 
	\hline 
	\# & order & id & $G$ 			& $k(G)$			 & Lee metric 			& why   \\	\hline
	60 & 24 (15) & 2 & $\Z_{24}$ & $12$ & \ding{51}  & cyclic    \\
	61 & & 9 & $\Z_{12} \times \Z_2$ & $13$ & \ding{51} & Prop.\@ \ref{some Lee} \\
	62 & & 15 & $\Z_3 \times \Z_2^3$ & $15$  & \ding{51} & Prop.\@ \ref{some Lee} \\ 
	63 & & 6 & $\D_{12}$ & $18$ & \ding{51}  & Thm.\@ \ref{Dn Lee}  \\
	64 & & 4 & $\Q_{24}$ & $12$ & \ding{51}  & Thm.\@ \ref{Qn Lee} \\
	65 & & 14 & $\D_6 \times \Z_2$ & $19$ & \ding{51}  & Prop.\@ \ref{some Lee}  \\
	66 & & 10 & $\D_4 \times \Z_3$ & $14$ & \ding{55}  &   sage  \\
	67 & & 5 & $\D_3 \times \Z_4$ & $15$ & \ding{51}  &   sage  \\
	68 & & 7 & $\Q_{12} \times \Z_2$ & $13$ & \ding{51}  &  Prop.\@ \ref{some Lee} \\
	69 & & 11 & $\Q_8 \times \Z_3$ & $12$ & \ding{51} &  sage \\
	70 & & 13 & $\mathbb{A}_4 \times \Z_2$ & $15$ & \ding{55}  & sage \\ 
	71 & & 12 & $\Sym_4$ & $16$ & \ding{55}  &  sage \\  
	72 & & 3 & $\mathrm{SL}_2(\ff_3)$ & $12$ & \ding{55}  &  sage  \\
	73 & & 8 & $(\Z_6 \times \Z_2) \rtimes \Z_2$ & $16$ & \ding{51} & sage \\ 
	74 & & 1 & $\Z_3 \rtimes \Z_8$ & $12$ & \ding{55} & sage   \\ \hline
%
%
	75 & 25 (2)& 1 & $\Z_{25} $ & $12$ & \ding{51} & cyclic \\ 
	76 & 	& 2 & $\Z_5^2$ & $12$ & \ding{55} & Thm.\@ \ref{odd order}\\ \hline
	77 & 26	(2)& 2& $\Z_{26}$ & $13$ & \ding{51} & cyclic  \\ 
	78 &    & 1 & $\D_{13}$ & $19$ & \ding{51} & Thm.\@ \ref{Dn Lee} \\ \hline
	79 & 	27 (5) & 1 & $\Z_{27}$ & $13$ & \ding{51} & cyclic \\
	80 & 	& 2 & $\Z_9 \times \Z_3$ & $13$ & \ding{55} & Thm.\@ \ref{odd order} \\
	81 & 	& 5 & $\Z_3^3$ & $13$ & \ding{55} &  Thm.\@ \ref{odd order} \\ 
	82 & 	& 4 & $\Z_9 \rtimes \Z_3$ & $13$ & \ding{55} & Thm.\@ \ref{odd order}, Thm.\@ \ref{non abelian cond} \\ 
	83 & 	& 3 & $\Z_3^2 \rtimes \Z_3$ & $13$ & \ding{55} & Thm.\@ \ref{odd order}, Thm.\@ \ref{non abelian cond} \\ \hline
	84 & 28	(4)& 2 & $\Z_{28}$ & $14$ & \ding{51} & cyclic \\
	85 &    & 4 & $\Z_{14} \times \Z_2$ & $15$ & \ding{51} & Prop.\@ \ref{some Lee} \\ 
	86 & 	& 3 & $\D_{14}$ & $21$ & \ding{51} & Thm.\@ \ref{Dn Lee}  \\
	87 &    & 1 & $\Q_{28}$ & $14$ & \ding{51} & Thm.\@ \ref{Qn Lee} \\ \hline
	88 & 	29 (1)& 1 & $\Z_{29}$ & $14$ & \ding{51} & cyclic\\ \hline
	89 & 30 (4)	& 4 & $\Z_{30}$ & $15$ & \ding{51} & cyclic \\
	90 & 	& 3& $\D_{15}$ & $22$ & \ding{51} & Thm.\@ \ref{Dn Lee} \\
	91 & 	& 2 & $\D_5 \times \Z_3$ & $17$ & \ding{55} & Thm.\@ \ref{non abelian cond} \\
	92 &    & 1 & $\D_3 \times \Z_5$ & $16$ & \ding{55} & Thm.\@ \ref{non abelian cond} \\ \hline
	93 & 31 (1)& 1 & $\Z_{31}$ & $15$ & \ding{51} & cyclic \\ \hline
	\end{tabular}$$
	\vspace{-5mm}
	\end{table}

\clearpage

\end{document}